\documentclass[preprint,12pt]{elsarticle}

\makeatletter
\def\ps@pprintTitle{} 
\let\@runspecial\relax
\makeatother

\usepackage[a4paper,margin=2.5cm]{geometry}
\usepackage[T1]{fontenc}
\usepackage[utf8]{inputenc}
\usepackage{lmodern}

\usepackage{latexsym}
\usepackage{amsmath,amsfonts,amssymb,amsthm}
\usepackage{xcolor}
\usepackage{graphicx}
\usepackage[colorlinks=true,allcolors=blue,unicode]{hyperref}
\usepackage{hyphenat}

\def\FF{{\mathbb{F}}}

\def\CC{{\mathbb{C}}}
\def\NN{{\mathbb{N}}}

\def\RR{{\mathbb{R}}}

\def\ben{\begin{enumerate}}
\def\een{\end{enumerate}}
\def\l({\left(}
\def\r){\right)}

\def\cE{\mathcal{E}}
\def\cU{\mathcal{U}}
\def\cV{\mathcal{V}}

\def\cA{\mathcal{A}}
\def\cB{\mathcal{B}}

\def\cT{\mathcal{T}}
\def\cI{\mathcal{I}}
\def\cP{\mathcal{P}}

\def\cJ{\mathcal{J}}

\def\cX{\mathcal{X}}

\def\cS{\mathcal{S}}

\def\cO{\mathcal{O}}
\def\cW{\mathcal{W}}
\def\cX{\mathcal{X}}

\def\bcirc{\mathrm{bcirc}}
\def\diag{\mathrm{diag}}
\def\rank{\mathrm{rank}}

\def\fold{\mathrm{fold}}
\def\unfold{\mathrm{unfold}}

\def\ol{\overline}

\def\beqn{\begin{eqnarray*}}
\def\eeqn{\end{eqnarray*}}
\def\bequ{\begin{eqnarray}}
\def\eequ{\end{eqnarray}}
\def\barr{\begin{array}}
\def\earr{\end{array}}

\def\mxl{\left[\begin{array}}
\def\mxr{\end{array}\right]}
\def\detl{\left|\begin{array}}
\def\detr{\end{array}\right|}
\def\bmx{\begin{bmatrix}}
\def\emx{\end{bmatrix}}

\newtheorem{theorem}{Theorem}[section]
\newtheorem{lemma}[theorem]{Lemma}

\newtheorem{remark}[theorem]{Remark}
\numberwithin{equation}{section}
\def\bth{\begin{theorem}}
\def\eth{\end{theorem}}

\journal{Linear Algebra and Its Applications}
\begin{document}
\begin{frontmatter}

\title{Real tensor factorizations and generalized inverses under the $t$-product}

\author[1,2]{Faustino Maciala}
\ead{fausmacialamath@hotmail.com}

\author[3]{C. Mendes Ara\'ujo}
\ead{clmendes@math.uminho.pt}

\author[3]{Pedro Patr\'{\i}cio}
\ead{pedro@math.uminho.pt}



\affiliation[1]{organization={CMAT -- Centre of Mathematics, Universidade do Minho},
postcode={4710-057},
city={Braga},
country={Portugal}}

\affiliation[2]{organization={Departamento de Ci\^encias da Natureza e Ci\^encias Exatas do Instituto Superior de Ci\^encias da Educa\c{c}\~ao de Cabinda},
city={Cabinda},
country={Angola}}

\affiliation[3]{organization={CMAT -- Centre of Mathematics and Department of Mathematics, Universidade do Minho},
postcode={4710-057},
city={Braga},
country={Portugal}}

\begin{abstract}
The algebraic theory of third-order tensors under the $t$-product is naturally formulated over the complex field via Fourier block diagonalization. However, many applications require real-valued representations. In this paper, we investigate structural conditions ensuring that tensor factorizations and generalized inverses admit real realizations.

We show that these conditions can be characterized through the conjugate-pairing structure of the Fourier frontal slices, which determines when transform-domain constructions yield real tensors after inverse transformation. As applications, we obtain real versions of several tensor factorizations and analyze the existence and structure of associated generalized inverses.

These results provide a framework for transferring matrix-based constructions to real tensors while preserving the algebraic constraints of the $t$-product.
\end{abstract}

\begin{keyword}
third order tensor \sep $t$-product \sep real tensor fatorizations \sep real tensor generalized inverses

\MSC 15A69 \sep 15A23 \sep 15A18 \sep 15A09 
\end{keyword}

\end{frontmatter}

\section{Introduction}\label{sec:intro}

Third-order tensors equipped with the $t$-product provide an algebraic setting in which many classical matrix concepts extend naturally to multilinear data. A key feature of this approach is that tensor operations can be analyzed through the discrete Fourier transform along the third dimension, where they reduce to matrix operations on the corresponding frontal slices \cite{KernfeldKilmerAer2015,KilmerBramanHao2013,KilmerMartin2011}.

This Fourier-domain viewpoint underlies much of the existing algebraic and computational
theory for third-order tensors. In particular, it has been used to develop tensor singular value decompositions, tensor functions, Jordan-type constructions, and generalized inverses under the $t$-product (see, for e.g., 
\cite{ChenMaMiaoWei2023,KilmerMartin2011,Martin2013,MiaoQiWei2020,MiaoQiWei2021}).
Its appeal lies in the fact that tensor problems can often be analyzed by transporting familiar matrix constructions to the Fourier blocks and then reassembling the result through the inverse transform.

At the same time, many of the applications that motivate the $t$-product framework are inherently real-valued. This is the case, for example, in grayscale and color imaging, image deblurring, video processing, face recognition, hyperspectral imaging, tensor completion, and related inverse problems, where the underlying data are naturally modeled by real arrays and where one often seeks real reconstructions, real factorizations, or real inverse operators \cite{HaoKilmerBramanHoover2013,KernfeldKilmerAer2015,KilmerBramanHao2013,KilmerMartin2011,ReichelUgwu2021,ReichelUgwu2022,ZhangElyKilmer2014}.
From both a mathematical and computational perspective, it is therefore important to understand when transform-domain constructions actually return real tensors.

However, this issue is subtler than it may first appear. Even when the original tensor is
real-valued, its Fourier-domain representation is generally complex. As a consequence, tensor factorizations and generalized inverses constructed blockwise in the transform domain do not automatically yield real tensors after inverse transformation. What matters is not that each individual Fourier block be real, but rather that the family of blocks satisfy the conjugate symmetry relations imposed by the discrete Fourier transform.

In this paper, we study structural conditions under which tensor factorizations and generalized inverses under the $t$-product admit real realizations. The perspective adopted throughout this paper can be summarized as follows. Real-valuedness is not enforced at the level of individual Fourier blocks, but rather emerges from a coordinated choice of data across conjugate pairs in the Fourier domain. More precisely, constructions are first carried out in the block-diagonal Fourier representation, where tensor operations reduce to matrix operations, and the required algebraic properties are imposed blockwise. These choices are then coupled across conjugate indices so as to respect the symmetry relations induced by the discrete Fourier transform. Only after this compatibility is ensured do we apply the inverse transform, which then yields real tensors.

We refer to this approach as the \emph{conjugate-pairing principle}. It provides a unified mechanism for constructing real tensors from the Fourier-domain and serves as the conceptual basis for all the results developed in the paper.

Within this framework, we establish several real tensor factorizations under the $t$-product,
namely real versions of the $t$-SVD, the $t$-Schur decomposition, a Jordan-type decomposition, and a factorization through a real idempotent tensor. The common feature of these results is that real-valuedness is recovered globally through compatible choices across conjugate Fourier blocks, rather than by imposing reality independently on each block. As the examples presented later show, this distinction is essential: constructions that are formally natural over the complex field do not, in general, yield real tensor factors without an additional compatibility argument. The present approach makes this mechanism explicit and provides a consistent framework in which the resulting real factorizations are
obtained in a mathematically rigorous way.

We then apply the same principle to generalized inverses, obtaining real realizations and
transform-domain descriptions for the main generalized inverses considered in this work,
including the Moore--Penrose inverse, the Drazin inverse, and the group inverse. In addition, we show that the ring $(\mathbb{R}^{n\times n\times p},+, \ast)$ is unit regular, thereby placing these constructions in a natural ring-theoretic setting.

For background on generalized inverses over rings and, in particular, on generalized inverses of complex and real matrices, we refer the reader to \cite{BenIsraelGreville2003,CampbellMeyer2009,ChenBook}.

The paper is organized as follows. Section~\ref{sec:lemmata}  establishes the Fourier-domain framework that underlies the paper, with special attention to the conjugate-pairing relations satisfied by the Fourier blocks of real tensors and to the characterization of when inverse Fourier constructions return real tensors. Section~\ref{sec:factorizations} uses this framework to derive real tensor factorizations in the $t$-product setting by coupling matrix-level constructions across conjugate Fourier blocks. Section~\ref{sec:g-inverses} then applies the same mechanism to the study of generalized inverses, yielding real tensor versions and transform-domain representations of the corresponding inverse constructions.

\medskip
We next fix notation and recall the $t$-product framework and generalized inverses needed in the sequel.

In this paper, $\mathbb{F}$ denotes either the real field $\mathbb{R}$ or the complex field $\mathbb{C}$. We write $\mathbb{F}^{m \times n}$ for the vector space of $m \times n$ matrices over $\mathbb{F}$, and $\mathbb{F}^{m \times n \times p}$ for the vector space of third-order tensors with entries in $\mathbb{F}$.

Matrices are denoted by capital italic letters (e.g., $A \in \mathbb{F}^{m \times n}$), and tensors by calligraphic letters (e.g., $\cA \in \mathbb{F}^{m \times n \times p}$). The entry $(i,j,k)$ of
a tensor $\cA$ is denoted by $\cA_{ijk}$ .
The $k$-th frontal slice of $\cA$, $[\cA_{i,j,k}]_{i=1,\dots, m;j=1,\dots, n}$, is denoted by $A^{(k)} \in \mathbb{F}^{m \times n}$.

The transpose, the conjugate and conjugate transpose of a matrix $A$ are denoted by $A^T$, $\ol{A}$ and $A^*$, respectively. The identity matrix of order $n$ is denoted by $I_n$ and the Kronecker product of matrices $A$ and $B$ is denoted by $A \otimes B$. For matrices $A_1, \dots, A_p$, we write $\diag(A_1, \dots, A_p)$ for the block diagonal matrix with diagonal blocks $A_1, \dots, A_p$.

We now recall the basic constructions associated with the $t$-product. Let $\cA \in \FF^{m \times n \times p}$ with frontal slices $A^{(1)}, \dots, A^{(p)}$. The $\unfold$ map is defined by
\[
\unfold(\cA) =
\begin{bmatrix}
A^{(1)}\\ A^{(2)}\\ \vdots\\ A^{(p)}
\end{bmatrix}\in\FF^{(mp)\times n},
\]
and its reciprocal $\fold: \FF^{(mp) \times n} \to \FF^{m \times n \times p}$ satisfies
\[
\fold(\unfold(\cA))=\cA.
\]
The block-circulant linear operator $\bcirc: \FF^{m \times n \times p} \to \FF^{(mp) \times (np)}$ is defined by
\[
\bcirc(\cA)=
\begin{bmatrix}
A^{(1)} & A^{(p)} & \cdots & A^{(2)}\\
A^{(2)} & A^{(1)} & \cdots & A^{(3)}\\
\vdots & \vdots & \ddots & \vdots\\
A^{(p)} & A^{(p-1)} & \cdots & A^{(1)}
\end{bmatrix}.
\]
We can also define the corresponding inverse operator $\bcirc^{-1}:\FF^{(mp)\times (np)} \to \FF^{m\times n\times p}$.

Given $\cA \in \FF^{m \times n \times p}$ and $\cB \in \FF^{n \times \ell \times p}$, their \emph{$t$-product} is defined \cite{KilmerMartin2011} by
\[
\cA * \cB = \fold\big( \bcirc(\cA) \unfold(\cB) \big)
\in \FF^{m \times \ell \times p}.
\]
The sum of tensors of the same size is defined entrywise. These operations equip $\FF^{n \times n \times p}$ with a ring structure, where the identity element $\mathcal{I}$ is given by $I^{(1)} = I_n$ and $I^{(k)} = 0$ for $k \geq 2$ and the zero tensor $\cO$ is the unique tensor in $\FF^{n\times n\times p}$ whose frontal slices are all zero matrices. As usual, powers are taken with respect to $*$: $\cA^0=\cI$ and $\cA^{k}=\cA*\cA^{k-1}$ for $k\ge 1$.

Given $\alpha\in\FF$ and a tensor $\cA\in\FF^{n \times n \times p}$, we define $\alpha\cA$ as the $n\times n\times p$ tensor whose $(i,j,k)$-entry is $\alpha\cA_{i,j,k}$. The ring $\FF^{n \times n \times p}$ is, in fact, a unital algebra with unity $\cI$.

A tensor $\cA \in \FF^{n \times n \times p}$ is said to be \emph{$t$-invertible} if there exists $\mathcal{X} \in \FF^{n \times n \times p}$ satisfying $\cA * \cX = \cX * \cA = \cI$, in which case $\cX$ is called the $t$-inverse of $\cA$. 

The real tensor transpose $\cA^T$ (and, over $\CC$, the tensor conjugate transpose $\cA^H$) is obtained by taking the transpose (conjugate transpose) of each frontal slice of $\cA$ and then reversing the order of the transposed frontal slices $2$ through $p$, that is $\bcirc^{-1}\big(\bcirc(\cA)^T\big)$. 

A real tensor $\cA$ is said to be orthogonal if $\cA^{-1}=\cA^T$.

A tensor $\cA \in \mathbb R^{n\times n\times p}$ is said to be $t$-symmetric if $\cA^T = \cA$. Equivalently, $\cA$ is $t$-symmetric if its block-circulant representation $\bcirc(\cA)$ is a symmetric matrix.

A fundamental tool in the analysis of the $t$-product is the Fourier block diagonalization of block-circulant matrices. Let $\omega = e^{2\pi i/p}$ be a primitive $p$th root of unity, and set \[ \xi = \overline{\omega}=\omega^{-1}=e^{-2\pi i/p}.\] Then $\overline{\xi^m}=\xi^{-m}=\xi^{p-m}$ for every integer $m$. We consider the normalized Fourier matrix of order $p$
 \begin{equation} F_p=\frac{1}{\sqrt{p}}\big[\xi^{(j-1)(k-1)}\big]_{j,k=1}^p,\label{eq:Fp}\end{equation}
which satisfyies  $F_p^*F_p=I_p.$
 
A tensor is called \emph{$f$-diagonal}, \emph{$f$-upper triangular}, or \emph{$f$-lower triangular} if each frontal slice is, respectively, diagonal, upper triangular, or lower triangular.

Finally, we recall the main generalized inverses associated with the $t$-product. 

A \emph{von Neumann inverse} of $\cA\in \RR^{m \times n \times p}$ is any $\cX\in\RR^{n\times m\times p}$ satisfying $\cA*\cX*\cA=\cA$; a particular \emph{von Neumann inverse} of $\cA$ will be denoted by $\cA^{-}$ and the set of all such inverses by $\cA\{1\}$.

The \emph{group inverse} of $\cA\in \RR^{n \times n \times p}$, when it exists, is the unique tensor $\cA^\#\in \RR^{n \times n \times p}$ such that
\[
\cA*\cA^\#*\cA=\cA,\qquad
\cA^\#*\cA*\cA^\#=\cA^\#,\qquad
\cA*\cA^\#=\cA^\#*\cA.
\]

The \emph{Drazin inverse} of $\cA\in \RR^{n \times n \times p}$, when it exists, is the unique tensor $\cA^D\in \RR^{n \times n \times p}$ for which there exists $k\in\NN$ such that
\[
\cA^{k+1}*\cA^D=\cA^{k},\qquad
\cA^D*\cA*\cA^D=\cA^D,\qquad
\cA*\cA^D=\cA^D*\cA.
\]
The least such $k$ is the \emph{Drazin index} $i(\cA)$. If $i(\cA)=1$, then $\cA^D$ coincides with $\cA^\sharp$.

Finally, the \emph{Moore--Penrose inverse} of $\cA\in \RR^{m \times n \times p}$, when it exists, is the unique tensor $\cA^\dagger\in \RR^{n \times m\times p}$ satisfying the four Penrose equations
\[
\cA*\cA^\dagger*\cA=\cA,\qquad
\cA^\dagger*\cA*\cA^\dagger=\cA^\dagger,\qquad
(\cA*\cA^\dagger)^T=\cA*\cA^\dagger,\qquad
(\cA^\dagger*\cA)^T=\cA^\dagger*\cA.
\]

\section{Lemmata}\label{sec:lemmata}

In this section we collect a small set of technical statements that will be used repeatedly. In addition to their technical role, these results encode the conjugate symmetry constraints imposed by the discrete Fourier transform on real tensors. In particular, they identify the precise mechanism through which real-valuedness is recovered from Fourier-domain representations, namely via the pairing of conjugate blocks.

\begin{lemma}[\cite{KilmerMartin2011}]
\label{lem:bcirc-hom}
Given tensors $\cA,\cB\in\FF^{n\times n\times p}$, one has
\[
\bcirc(\cA+\cB)=\bcirc(\cA)+\bcirc(\cB),\qquad
\bcirc(\cA*\cB)=\bcirc(\cA)\,\bcirc(\cB),\qquad
\bcirc(\cI)=I_{np}.
\]
Moreover, 
\[
\bcirc:\ (\FF^{n\times n\times p},+,*)\ \longrightarrow\ (\FF^{np\times np},+,\cdot)
\]
is an algebra monomorphism. In addition, over $\RR$,
\begin{enumerate}
\item $(\cA*\cB)^T=\cB^T*\cA^T$;
\item if $\cA$ is $t$-invertible, then $\bcirc(\cA^{-1})=\bcirc(\cA)^{-1}$.
\item $\cA$ is orthogonal if and only if $\bcirc(\cA)$ is orthogonal.
\end{enumerate}
\end{lemma}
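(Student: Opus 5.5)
The plan is to read everything off the definitions: the $t$-product is $\fold\big(\bcirc(\cA)\unfold(\cB)\big)$, and $\bcirc$ builds a block-circulant matrix from the frontal slices. Additivity and $\bcirc(\cI)=I_{np}$ are immediate. The slices of $\cA+\cB$ are $A^{(k)}+B^{(k)}$, and $\bcirc$ places slices linearly into fixed block positions; $\cI$ has $I_n$ in slice $1$ and zeros elsewhere, so only the diagonal blocks survive. Scalar multiplication is likewise preserved. Injectivity is also immediate, since the first block column of $\bcirc(\cA)$ is $\unfold(\cA)$, which determines $\cA$.

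The only real content is multiplicativity, which I would prove by block indices. Index the slices modulo $p$, so that the $(i,j)$ block of $\bcirc(\cA)$ is $A^{(i-j+1)}$. Then $\cA*\cB$ has slices
\[
C^{(k)}=\sum_{l} A^{(k-l+1)}B^{(l)} .
\]
The $(i,j)$ block of $\bcirc(\cA)\bcirc(\cB)$ is
\[
\sum_{r} A^{(i-r+1)}B^{(r-j+1)} .
\]
Substituting $l=r-j+1$ turns this into $\sum_l A^{((i-j+1)-l+1)}B^{(l)}=C^{(i-j+1)}$, which is exactly the $(i,j)$ block of $\bcirc(\cA*\cB)$. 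This cyclic reindexing is the one step that needs care, and I expect it to be the main (if modest) obstacle. Combined with the linearity above, it shows that $\bcirc$ is an injective unital algebra homomorphism, i.e.\ a monomorphism.

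For the real items I would use that, by definition, $\cA^T=\bcirc^{-1}\big(\bcirc(\cA)^T\big)$. This requires knowing that the transpose of a block-circulant matrix is again block circulant: its $(i,j)$ block is $(A^{(j-i+1)})^T$, which depends only on $i-j$. This also confirms the slice description given for $\cA^T$.

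\begin{enumerate}
\item Applying the homomorphism property gives $\bcirc\big((\cA*\cB)^T\big)=\big(\bcirc(\cA)\bcirc(\cB)\big)^T=\bcirc(\cB)^T\bcirc(\cA)^T=\bcirc(\cB^T*\cA^T)$. Injectivity of $\bcirc$ then yields $(\cA*\cB)^T=\cB^T*\cA^T$.
\item Applying $\bcirc$ to $\cA*\cA^{-1}=\cA^{-1}*\cA=\cI$ gives $\bcirc(\cA)\bcirc(\cA^{-1})=\bcirc(\cA^{-1})\bcirc(\cA)=I_{np}$, so $\bcirc(\cA^{-1})=\bcirc(\cA)^{-1}$.
\item If $\cA$ is orthogonal, then $\bcirc(\cA)^{-1}=\bcirc(\cA^{-1})=\bcirc(\cA^T)=\bcirc(\cA)^T$. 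Conversely, suppose $\bcirc(\cA)^T\bcirc(\cA)=\bcirc(\cA)\bcirc(\cA)^T=I_{np}$. Rewriting this as $\bcirc(\cA^T*\cA)=\bcirc(\cA*\cA^T)=\bcirc(\cI)$, injectivity gives $\cA^T*\cA=\cA*\cA^T=\cI$, so $\cA^{-1}=\cA^T$.
\end{enumerate}
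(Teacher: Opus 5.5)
Your proof is correct and complete. The paper gives no proof of its own for this lemma; it simply cites Kilmer--Martin. Your argument is the standard direct verification:
\begin{itemize}
\item the cyclic reindexing $l=r-j+1$ gives multiplicativity;
\item injectivity follows because the first block column of $\bcirc(\cA)$ is $\unfold(\cA)$;
\item the transpose, inverse and orthogonality items are then transferred back to tensors through that injectivity.
\end{itemize}

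You also rightly check that $\bcirc(\cA)^T$ is again block circulant. The paper's definition $\cA^T=\bcirc^{-1}\big(\bcirc(\cA)^T\big)$ tacitly needs this, since $\bcirc^{-1}$ only makes sense on block-circulant matrices.
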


The block-circulant matrix of a tensor is unitarily similar to a block-diagonal matrix whose blocks are the Fourier transforms of its frontal slices.

\begin{lemma}[\cite{KilmerMartin2011}] 
\label{lem:bcirc-dft}
Given a tensor $\cA\in\CC^{n\times n\times p}$ with frontal slices $A^{(1)}, \dots, A^{(p)}$, we have 
\begin{equation}
\label{eq:bcirc-dft}
  \bcirc(\cA)=(F_p^\ast \otimes I_n)\,\diag(A_1,\dots,A_p) (F_p\otimes I_n).
\end{equation}
where  $A_1,\dots,A_p\in\CC^{n\times n}$ such that
\begin{equation*}
A_i=\sum_{k=1}^p \xi^{(i-1)(k-1)}A^{(k)}, \quad (i=1,\dots,p). 
\end{equation*}
are the Fourier blocks.
\end{lemma}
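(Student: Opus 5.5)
The plan is to prove \eqref{eq:bcirc-dft} by showing directly that $(F_p\otimes I_n)\,\bcirc(\cA)\,(F_p^\ast\otimes I_n)$ is block diagonal with diagonal blocks $A_1,\dots,A_p$. Since $F_p^\ast F_p=I_p$, the matrix $F_p\otimes I_n$ is unitary with inverse $F_p^\ast\otimes I_n$, so this is equivalent to the claimed identity.

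\textbf{Step 1: decompose $\bcirc(\cA)$.} Let $Z\in\RR^{p\times p}$ be the cyclic downshift permutation matrix, $Z e_j=e_{j+1}$ with indices taken mod $p$. From the definition of $\bcirc$, block $(r,s)$ of $\bcirc(\cA)$ equals $A^{(k)}$ exactly when $r-s\equiv k-1 \pmod p$. Hence
\[
\bcirc(\cA)=\sum_{k=1}^p Z^{k-1}\otimes A^{(k)} .
\]
By the mixed-product property of the Kronecker product,
\[
(F_p\otimes I_n)\,\bcirc(\cA)\,(F_p^\ast\otimes I_n)=\sum_{k=1}^p \big(F_p Z^{k-1}F_p^\ast\big)\otimes A^{(k)},
\]
so it suffices to diagonalize $Z$.

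\textbf{Step 2: diagonalize $Z$.} I will show that $F_pZF_p^\ast=\Omega:=\diag(1,\xi,\xi^2,\dots,\xi^{p-1})$, which is equivalent to $F_pZ=\Omega F_p$. Entrywise, $(F_pZ)_{j,s}=(F_p)_{j,s+1}$ with $s+1$ read mod $p$. This equals $\frac1{\sqrt p}\xi^{(j-1)s}=\xi^{j-1}\cdot\frac1{\sqrt p}\xi^{(j-1)(s-1)}=(\Omega F_p)_{j,s}$. The wraparound case $s=p$ is consistent because $\xi^{(j-1)p}=1$. Consequently $F_pZ^{k-1}F_p^\ast=\Omega^{k-1}$.

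\textbf{Step 3: assemble.} Substituting into Step 1 gives
\[
\sum_{k=1}^p\Omega^{k-1}\otimes A^{(k)}=\diag\Big(\sum_k \xi^{(i-1)(k-1)}A^{(k)}\Big)_{i=1}^p=\diag(A_1,\dots,A_p).
\]
Multiplying on the left by $F_p^\ast\otimes I_n$ and on the right by $F_p\otimes I_n$ yields \eqref{eq:bcirc-dft}.

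The only delicate point is the index and sign bookkeeping. Two things have to match. The circulant pattern must correspond to $Z$ rather than $Z^T$, and $F_p$ is built from $\xi=\omega^{-1}$ rather than $\omega$. The convention chosen for $Z$ is exactly what makes the eigenvalue $\xi^{i-1}$, and not $\omega^{i-1}$, appear in block $i$. This is the step where I would check a small case, say $p=2$ or $p=3$, to confirm that the conventions agree.
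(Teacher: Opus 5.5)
Your proof is correct. The index bookkeeping is consistent. With $Ze_j=e_{j+1}$ one has $(Z^{k-1})_{r,s}=1$ exactly when $r-s\equiv k-1 \pmod p$, which matches the paper's definition of $\bcirc$ (for example, block $(1,2)$ is $A^{(p)}$). Also, $F_pZ=\Omega F_p$ with $\Omega=\diag(1,\xi,\dots,\xi^{p-1})$ produces exactly $A_i=\sum_k\xi^{(i-1)(k-1)}A^{(k)}$, so no small-case check is needed.

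The paper gives no proof of this lemma and simply cites Kilmer--Martin, where it rests on the classical fact that the DFT diagonalizes circulant matrices. Your argument proves that fact for the cyclic shift and lifts it to the block case via the Kronecker mixed-product rule, which is the standard route.
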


\begin{remark}
We remark that we may reverse the process of the previous lemma by computing
\begin{equation*}
A^{(i)}=\frac{1}{p}\sum_{k=1}^p \overline{\xi^{(i-1)(k-1)}}A_k, \quad (i=1,\dots,p). 
\end{equation*}
\end{remark}

The following result makes explicit the conjugate-pairing structure that arises when the tensor is real. Although it follows directly from the definition of the Fourier transform, we state it explicitly in the present tensor setting, as this structure is the key ingredient in all subsequent constructions.

\begin{lemma}\label{lem:conj-pairing}
Let $\mathcal A\in\RR^{n\times n\times p}$ and consider the factorization 
\[
\bcirc(\mathcal A)=(F_p^*\otimes I_n)\,\mathrm{diag}(A_1,\dots,A_p)\,(F_p\otimes I_n).
\]
Then $A_1$ is real and, for every $i\in\{2,\dots,p\}$,
\[
A_{p-i+2}=\overline{A_i}.
\]
\end{lemma}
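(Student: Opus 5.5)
The plan is to work directly from the explicit formula for the Fourier blocks in Lemma~\ref{lem:bcirc-dft},
\[
A_i=\sum_{k=1}^p \xi^{(i-1)(k-1)}A^{(k)},\qquad i=1,\dots,p,
\]
and to use only two facts: the frontal slices $A^{(k)}$ are real because $\cA\in\RR^{n\times n\times p}$, and $\overline{\xi^m}=\xi^{-m}=\xi^{p-m}$ for every integer $m$.

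First I treat $A_1$. Setting $i=1$ makes every exponent $(i-1)(k-1)$ equal to zero. Hence $A_1=\sum_{k=1}^p A^{(k)}$, which is a sum of real matrices and therefore real.

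Next I fix $i\in\{2,\dots,p\}$ and put $j=p-i+2$, so that $j\in\{2,\dots,p\}$ and $j-1=p-(i-1)$. Conjugating the formula for $A_i$ entrywise, and using that each $A^{(k)}$ is real, gives
\[
\overline{A_i}=\sum_{k=1}^p \overline{\xi^{(i-1)(k-1)}}\,A^{(k)}=\sum_{k=1}^p \xi^{-(i-1)(k-1)}A^{(k)}.
\]
Since $\xi^p=1$, I have $\xi^{-(i-1)(k-1)}=\xi^{(p-(i-1))(k-1)}=\xi^{(j-1)(k-1)}$, because the two exponents differ by $p(k-1)$. Substituting this, the sum becomes exactly $A_j=A_{p-i+2}$.

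There is no real obstacle here. The only point needing care is the index bookkeeping: checking that $p-i+2$ stays in $\{2,\dots,p\}$, and that the exponent reduction is taken modulo $p$ via $\xi^p=1$ rather than only through the single identity $\overline{\xi^m}=\xi^{p-m}$. I will also note that when $p$ is even and $i=p/2+1$, the relation says that $A_i$ is real, which is consistent with the statement.
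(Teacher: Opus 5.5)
Your proposal is correct and follows essentially the same route as the paper's proof: you show $A_1=\sum_k A^{(k)}$ is real, conjugate the formula for $A_i$, and identify the result with $A_{p-i+2}$. Your explicit appeal to $\xi^p=1$ to justify $\xi^{-(i-1)(k-1)}=\xi^{(p-(i-1))(k-1)}$ is slightly more careful than the paper, which cites only $\overline{\xi^m}=\xi^{p-m}$. That identity alone would give $\xi^{p-(i-1)(k-1)}$, so the periodicity step is the one actually needed.
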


\begin{proof}
Let $A^{(1)},\cdots,A^{(p)}$ be the frontal slices of $\cA$. Since $\mathcal A$ is real, each $A^{(k)}$ is real. Hence $A_1=\displaystyle \sum_{k=1}^p A^{(k)}$ is real.
For $i\ge2$, taking complex conjugates gives
\begin{align*}
\overline{A_i}
&=\sum_{k=1}^p \overline{\xi^{(i-1)(k-1)}}\,A^{(k)} \\
&=\sum_{k=1}^p \xi^{-(i-1)(k-1)}\,A^{(k)} \\
&=\sum_{k=1}^p \xi^{(p-(i-1))(k-1)}\,A^{(k)} \\
&=A_{p-i+2},
\end{align*}
using $\overline{\xi^m}=\xi^{-m}=\xi^{p-m}$. If $p$ is even and $i=\frac{p+2}{2}$, then $p-i+2=i$, so $A_i=\overline{A_i}$ and $A_i$ is real.
\end{proof}

Lemma \ref{lem:conj-pairing} shows that the Fourier blocks of a real tensor are not independent, but are linked through a conjugate symmetry relation. In particular, apart from the real blocks ($A_1$ and, when $p$ is even, $A_{\frac{p+2}{2}}$), all remaining blocks occur in conjugate pairs.

The next result provides the converse statement and will be used as the main tool for enforcing real-valuedness. It shows that the conjugate-pairing structure identified above is not only necessary but also sufficient for a Fourier-domain construction to correspond to a real tensor.

\begin{lemma}\label{lem:realtensor}
If $\cA\in\FF^{n\times n\times p}$ is such that $$\bcirc(\cA) = (F_p^* \otimes I_n)\, \diag (A_1,\dots,A_p) (F_p \otimes I_n),$$  where $A_1$ is real and $A_{p-k+2} = \overline{A_k}$, for all $k\in\{2,\ldots,p\}$, then $\cA$ is real.
\end{lemma}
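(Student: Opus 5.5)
The plan is to recover the frontal slices of $\cA$ explicitly from the Fourier blocks and check that each slice equals its own complex conjugate. First, since $F_p\otimes I_n$ is unitary and $\bcirc$ is injective (Lemma~\ref{lem:bcirc-hom}), the blocks $A_1,\dots,A_p$ in the hypothesis are uniquely determined by $\cA$. They are therefore exactly the Fourier blocks of Lemma~\ref{lem:bcirc-dft}. Consequently the inversion formula of the subsequent Remark applies:
\[
A^{(i)}=\frac{1}{p}\sum_{k=1}^p \xi^{-(i-1)(k-1)}A_k,\qquad i=1,\dots,p,
\]
using $\overline{\xi^m}=\xi^{-m}$.

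Next I will conjugate this expression and reindex. Conjugating gives $\overline{A^{(i)}}=\frac1p\sum_{k=1}^p \xi^{(i-1)(k-1)}\overline{A_k}$. I split off the $k=1$ term, which equals $A_1$ because $A_1$ is real. For $k\ge2$ I use the hypothesis $\overline{A_k}=A_{p-k+2}$ and substitute $j=p-k+2$. As $k$ runs over $\{2,\dots,p\}$, the index $j$ runs over the same set, so this is a bijection. The exponent becomes
\[
(i-1)(k-1)=(i-1)(p-j+1)\equiv-(i-1)(j-1)\pmod p,
\]
and since $\xi^p=1$ we get $\xi^{(i-1)(k-1)}=\xi^{-(i-1)(j-1)}$.

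Reassembling the pieces yields $\overline{A^{(i)}}=\frac1p\sum_{j=1}^p\xi^{-(i-1)(j-1)}A_j=A^{(i)}$. Hence every frontal slice is real, and so $\cA$ is real.

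The only delicate point is the first step: justifying that the given $A_k$ coincide with the Fourier blocks, so that the Remark's inversion formula may be used. This follows from the injectivity of $\bcirc$ together with the invertibility of $F_p\otimes I_n$. Alternatively, one can directly compute $\bcirc(\cA)$ blockwise from the factorization, which gives the same inversion formula. The reindexing modulo $p$ is routine, and it covers the self-paired index $k=\frac{p+2}{2}$ when $p$ is even without any separate treatment.
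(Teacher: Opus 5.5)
Your proof is correct and is essentially the paper's argument: conjugate the inversion formula, isolate the real term $A_1$, substitute $\overline{A_k}=A_{p-k+2}$, and reindex via $j=p-k+2$ using $\xi^p=1$. Your preliminary check that the given $A_k$ coincide with the Fourier blocks makes explicit a step the paper takes for granted; note that this check only needs the invertibility of $F_p\otimes I_n$, not the injectivity of $\bcirc$.
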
 

\proof Recall that $\overline{\xi^m}=\xi^{-m}=\xi^{p-m}$. Let $A^{(1)},A^{(2)},\ldots, A^{(p)}$ denote the frontal slices of $\cA$.
Since $$A^{(i)}=\frac{1}{p}\displaystyle \sum_{k=1}^p \overline\xi^{\ (i-1)(k-1)}A_k,$$ we obtain
\begin{align*}
\overline{A^{(i)}}&=\frac{1}{p}\displaystyle \sum_{k=1}^p \xi^{\ (i-1)(k-1)}\overline{A_k}\\
&=\frac{1}{p}A_1+\frac{1}{p}\displaystyle \sum_{k=2}^p \xi^{\ (i-1)(k-1)}A_{p-k+2}\\
&=\frac{1}{p}A_1+\frac{1}{p}\displaystyle \sum_{k=2}^p \overline{\xi^{\ (i-1)(p-k+1)}}A_{p-k+2}\\
&=\frac{1}{p}A_1+\frac{1}{p}\displaystyle \sum_{j=2}^p \overline{\xi^{\ (i-1)(j-1)}}A_j\\
&=\frac{1}{p}\displaystyle \sum_{j=1}^p \overline\xi^{\ (i-1)(j-1)}A_j\\
&=A^{(i)}.
\end{align*}
Therefore, each frontal slice $A^{(i)}$ is a real matrix, and hence the tensor $\cA$ is real. \endproof

Taken together, Lemmas \ref{lem:conj-pairing}  and \ref{lem:realtensor} show that real-valuedness of a tensor is completely determined by a conjugate-pairing condition on its Fourier blocks. This characterization will serve as the basic mechanism in all subsequent constructions: instead of requiring the Fourier blocks themselves to be real, we will select them so as to satisfy the appropriate pairing relations, which in turn ensure that the inverse transform yields a real tensor. In the next section, this principle is applied to derive real tensor factorizations from matrix-level constructions in the Fourier domain.

\section{Real tensor factorizations}\label{sec:factorizations}

We now make explicit the general principle that underlies all subsequent constructions.

\medskip

\begin{lemma}[Conjugate-pairing principle]
\label{lem:reality-transfer}
Let $\cA \in \RR^{n\times n\times p}$ and write
\[
\bcirc(\cA)=(F_p^*\otimes I_n)\diag(A_1,\dots,A_p)(F_p\otimes I_n).
\]
Assume that, for each $k=1,\dots,p$, one assigns a matrix $\Phi(A_k)$, where $\Phi$ is a matrix
operation defined on the Fourier blocks and satisfying
\[
\Phi(\overline{M})=\overline{\Phi(M)}
\]
for every matrix $M$ in its domain.

If we define a tensor $\cB$ by
\[
\bcirc(\cB)=(F_p^*\otimes I_n)\diag(\Phi(A_1),\dots,\Phi(A_p))(F_p\otimes I_n),
\]
then $\cB$ is real-valued.
\end{lemma}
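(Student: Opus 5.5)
The plan is to reduce the statement to Lemma~\ref{lem:realtensor}. That means checking that the new family of Fourier blocks $B_k:=\Phi(A_k)$, $k=1,\dots,p$, has the conjugate-pairing structure required there. First, note that $\cB$ is well defined as an element of $\CC^{n\times n\times p}$. The right-hand side $(F_p^*\otimes I_n)\diag(B_1,\dots,B_p)(F_p\otimes I_n)$ is block circulant, because it is obtained from the inverse-transform formula of the Remark following Lemma~\ref{lem:bcirc-dft}. Hence it equals $\bcirc$ of the tensor whose frontal slices are $B^{(i)}=\frac1p\sum_k \overline{\xi^{(i-1)(k-1)}}B_k$. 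Note also that $\Phi$ may map $n\times n$ blocks to blocks of another common size, for instance $n\times m$ for a generalized inverse. Lemma~\ref{lem:realtensor} is stated for square blocks, but its proof uses only the slice formula, so it applies verbatim to rectangular blocks.

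Next, apply Lemma~\ref{lem:conj-pairing} to the real tensor $\cA$. It gives $A_1=\overline{A_1}$ and $A_{p-k+2}=\overline{A_k}$ for $k=2,\dots,p$. Because these blocks satisfy the pairing and $\Phi$ commutes with conjugation, the conjugate of each block lies in the domain of $\Phi$. So $\Phi(\overline{A_k})$ is defined, and the hypothesis applies to it. We then obtain
\[
\overline{B_1}=\overline{\Phi(A_1)}=\Phi(\overline{A_1})=\Phi(A_1)=B_1,
\]
so $B_1$ is real. For $k\ge2$,
\[
B_{p-k+2}=\Phi(A_{p-k+2})=\Phi(\overline{A_k})=\overline{\Phi(A_k)}=\overline{B_k}.
\]
These are exactly the hypotheses of Lemma~\ref{lem:realtensor} for the blocks $B_1,\dots,B_p$, and that lemma yields that $\cB$ is real.

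The only delicate point is interpreting $\Phi$. It must be a genuine function: one fixed output per input matrix, so that equal (or conjugate) blocks are sent to consistently related outputs. It must also have a conjugation-closed domain. When $\Phi$ involves choices, as in an SVD or a Jordan form, the real content is that the choice at index $p-k+2$ is made as the conjugate of the choice at index $k$, while a real choice is made at $k=1$ and, for even $p$, at $k=\frac{p+2}{2}$. I would state this explicitly in the proof. I expect this interpretive step, rather than the two-line computation, to be the main thing requiring care.
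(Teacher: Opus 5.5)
Your proposal is correct and follows the paper's proof. You apply Lemma~\ref{lem:conj-pairing} to $\cA$, use $\Phi(\overline{M})=\overline{\Phi(M)}$ to get that $\Phi(A_1)$ is real and that $\Phi(A_{p-k+2})=\overline{\Phi(A_k)}$, and conclude with Lemma~\ref{lem:realtensor}. Your extra remarks are accurate refinements that the paper leaves implicit: the Fourier-conjugated block-diagonal matrix is always block circulant, so $\cB$ exists, and $\Phi$ must be a single-valued, conjugation-compatible choice, which forces real outputs at the self-paired indices.
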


\begin{proof}
Since $\cA$ is real, Lemma~\ref{lem:conj-pairing} gives
\[
A_1\in\RR^{n\times n}, \qquad A_{p-k+2}=\overline{A_k}, \quad k=2,\dots,p.
\]
Hence $\Phi(A_1)$ is real, because
\[
\Phi(A_1)=\Phi(\overline{A_1})=\overline{\Phi(A_1)}.
\]
Moreover, for each $k=2,\dots,p$,
\[
\Phi(A_{p-k+2})
=
\Phi(\overline{A_k})
=
\overline{\Phi(A_k)}.
\]
Therefore, the Fourier blocks $\Phi(A_1),\dots,\Phi(A_p)$ satisfy the conjugate-pairing conditions
of Lemma~\ref{lem:realtensor}. It follows that the tensor $\cB$ is real-valued.
\end{proof}

\medskip

In other words, real tensor structure is recovered through compatibility across conjugate
Fourier blocks, rather than through real-valuedness of the individual blocks. All subsequent
factorization results will be obtained by applying this principle to suitable matrix decompositions in the Fourier domain.

A key subtlety, which is sometimes overlooked in the literature, is that a \emph{real} tensor generally gives rise to \emph{complex} Fourier blocks that are not individually real, but are linked through conjugate pairing. As a consequence, real-valuedness is not a blockwise property, and cannot be enforced independently at the level of each Fourier slice. Any construction carried out in the Fourier domain must therefore respect this coupling in order to yield real tensor factors after the inverse transform.

This issue already arises for SVD-type constructions. In \cite[Theorem~4.1]{Martin2013}, a real $t$-SVD is proposed by factoring each Fourier block as $A_i = U_i \Sigma_i V_i^T$, with $U_i,V_i$ orthogonal and $\Sigma_i$ diagonal with nonnegative entries, and then assembling the corresponding tensor factors. However, as the next example shows, such a blockwise construction does not in general preserve real-valuedness. The obstruction is precisely that the decompositions are chosen independently, without enforcing compatibility across conjugate Fourier blocks.

Let $\cA\in\RR^{2\times 2\times 4}$ have frontal slices
\[
A^{(1)}=A^{(3)}=I_2,\qquad
A^{(2)}=\begin{bmatrix}-1&0\\0&0\end{bmatrix},\qquad
A^{(4)}=\begin{bmatrix}0&0\\0&1\end{bmatrix},
\]
and let $F_4$ be the Fourier matrix, defined as in \eqref{eq:Fp},
\[
F_4=\frac12\begin{bmatrix}
1&1&1&1\\
1&-i&-1&i\\
1&-1&1&-1\\
1&i&-1&-i
\end{bmatrix}.
\]
Under the Fourier block diagonalization
\[
\bcirc(\cA)=(F_4^*\otimes I_2)\,\diag(A_1,\ldots,A_4)\,(F_4\otimes I_2),
\]
one obtains, in particular,
\[
A_2=A^{(1)}-iA^{(2)}-A^{(3)}+iA^{(4)}=\begin{bmatrix}i&0\\0&i\end{bmatrix}.
\]
Suppose, for the sake of argument, that $A_2$ admits a factorization of the form
$A_2=U\Sigma V^T$, where $U$ and $V$ are real orthogonal matrices and $\Sigma$ is diagonal with nonnegative entries. Then
\[
A_2A_2^T=U\Sigma^2U^T,
\]
which implies that $A_2 A_2^T$ is real symmetric and positive semidefinite. However, a direct computation shows that $A_2 A_2^T = \diag(-1,-1)$, which is negative definite. This contradiction shows that such a factorization cannot exist.

This example shows that, even for real tensors, it is not possible in general to select real orthogonal SVD factors independently for each Fourier block. The correct
construction is instead obtained by coupling singular value decompositions across conjugate pairs of blocks. 

The next theorem establishes the same real $t$-SVD statement as in \cite[Theorem~4.1]{Martin2013}, but with a proof that makes this compatibility explicit.

\begin{theorem}[real $t$-SVD]\label{thm:realtSVD}
Let $\cA\in \mathbb{R}^{n \times n \times p}$. Then there exist orthogonal tensors $\cU, \cV\in \mathbb{R}^{n \times n \times p}$ and an $f$-diagonal tensor $\cS\in \mathbb{R}^{n \times n \times p}$  such that
\[
\cA=\cU\ast\cS\ast\cV^T .
\]
\end{theorem}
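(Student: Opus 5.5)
We work in the Fourier domain and couple the singular value decompositions across conjugate pairs of blocks. Write
\[
\bcirc(\cA)=(F_p^*\otimes I_n)\,\diag(A_1,\dots,A_p)\,(F_p\otimes I_n).
\]
By Lemma~\ref{lem:conj-pairing}, $A_1$ is real and $A_{p-k+2}=\overline{A_k}$ for $k=2,\dots,p$. When $p$ is even, the block $A_{\frac{p+2}{2}}$ is also real. We split the index set $\{1,\dots,p\}$ into three kinds of indices:
\begin{enumerate}
\item the self-conjugate indices $1$ and, when $p$ is even, $\frac{p+2}{2}$;
\item the representatives $k$ with $2\le k<p-k+2$;
\item their partners $p-k+2$.
\end{enumerate}

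For each self-conjugate index $j$, the block $A_j$ is a real matrix. We take a real SVD $A_j=U_j\Sigma_jV_j^T$, with $U_j,V_j$ real orthogonal and $\Sigma_j$ real, diagonal and nonnegative. For each representative $k$, we take a complex SVD $A_k=U_k\Sigma_kV_k^*$, with $U_k,V_k$ unitary and $\Sigma_k$ real, diagonal and nonnegative. For its partner we do not compute an independent SVD. Instead we \emph{define}
\[
U_{p-k+2}=\overline{U_k},\qquad \Sigma_{p-k+2}=\Sigma_k,\qquad V_{p-k+2}=\overline{V_k}.
\]
Then $U_{p-k+2}\Sigma_{p-k+2}V_{p-k+2}^*=\overline{U_k\Sigma_kV_k^*}=\overline{A_k}=A_{p-k+2}$, so every block satisfies $A_j=U_j\Sigma_jV_j^*$. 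This coupled choice is the key step: it replaces the blockwise real factorization that fails in the example above.

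Next we define tensors $\cU,\cS,\cW$ through
\[
\bcirc(\cU)=(F_p^*\otimes I_n)\diag(U_1,\dots,U_p)(F_p\otimes I_n),
\]
and analogously for $\cS$ with the blocks $\Sigma_j$ and for $\cW$ with the blocks $V_j$. Each of these three families of blocks has a real first block (and a real middle block when $p$ is even), and satisfies $X_{p-k+2}=\overline{X_k}$. Hence Lemma~\ref{lem:realtensor} shows that $\cU,\cS,\cW$ are real tensors.

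We must also justify that these right-hand sides are block circulant, so that the tensors are well defined. This holds because conjugating any block-diagonal matrix by $F_p\otimes I_n$ gives a block-circulant matrix; equivalently, one defines the frontal slices by the inverse formula in the Remark.

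It remains to check the required properties.
\begin{enumerate}
\item \emph{$\cS$ is $f$-diagonal.} Each frontal slice $S^{(i)}=\frac1p\sum_k \overline{\xi^{(i-1)(k-1)}}\Sigma_k$ is a linear combination of diagonal matrices, hence diagonal.
\item \emph{$\cU$ is orthogonal.} $\bcirc(\cU)$ is unitarily similar to a block-diagonal unitary matrix, so it is unitary. Since it is real, it is orthogonal, and Lemma~\ref{lem:bcirc-hom}(3) gives that $\cU$ is orthogonal. The same argument applies to $\cW$; set $\cV=\cW$.
\item \emph{$\bcirc(\cV^T)=\bcirc(\cV)^T$.} This holds by the definition of the tensor transpose. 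Since $\bcirc(\cV)$ is real, $\bcirc(\cV)^T=\bcirc(\cV)^*=(F_p^*\otimes I_n)\diag(V_1^*,\dots,V_p^*)(F_p\otimes I_n)$.
\end{enumerate}

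By Lemma~\ref{lem:bcirc-hom},
\[
\bcirc(\cU*\cS*\cV^T)=(F_p^*\otimes I_n)\diag(U_j\Sigma_jV_j^*)(F_p\otimes I_n)=\bcirc(\cA).
\]
Injectivity of $\bcirc$ then gives $\cA=\cU*\cS*\cV^T$.

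The main point requiring care is the coupling step. We must check that the partner blocks are genuine SVDs and that the self-conjugate blocks admit real SVDs, which holds because those blocks are real.
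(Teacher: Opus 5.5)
Your proposal is correct and follows essentially the same route as the paper: couple the SVDs across conjugate Fourier pairs, take real SVDs of the self-conjugate blocks, use Lemma~\ref{lem:realtensor} for reality, and conclude by injectivity of $\bcirc$. Your explicit check that conjugating a block-diagonal matrix by $F_p\otimes I_n$ yields a block-circulant matrix (so the tensors are well defined) is a small addition that the paper leaves implicit.
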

\proof Let $\cA$ be a real tensor of order $n\times n\times p$ and consider its Fourier block diagonalization
\begin{equation*}
\bcirc(\cA) = (F_p^* \otimes I_n) \diag(A_1,\dots,A_p) (F_p \otimes I_n).\end{equation*}

By Lemma \ref{lem:conj-pairing}, the blocks satisfy the conjugate-pairing relations: $A_1$ is real and $A_{p-k+2}=\overline{A_k}$ for all $k\in\{2,\ldots,p\}$. Observe that $p-k+2=k$ if and only if $k=\frac{p+2}{2}$. Consequently, if $p$ is even, exactly two diagonal blocks are real, namely $A_1$ and $A_{\frac{p+2}{2}}$. If $p$ is odd, the only real block is $A_1$.

We construct singular value decompositions of the Fourier blocks in a way that is compatible with this pairing.

Let $N=\{k\in\NN\mid 2\le k<\tfrac{p+2}{2}\}$. For each $k\in N$, choose a singular value decomposition $A_k = U_k \Sigma_k V_k^\ast$, where $U_k$ and $V_k$ are unitary and $\Sigma_k$ is diagonal with nonnegative entries. We then define
\[
U_{p-k+2} = \overline{U_k}, \quad 
\Sigma_{p-k+2} = \Sigma_k, \quad 
V_{p-k+2} = \overline{V_k},
\]
which yields compatible singular value decompositions for the conjugate blocks. Indeed, since
$$A_{p-k+2}=\overline{A_k}=\overline{U_k}\, \Sigma_k\,  \overline{V_k}^\ast,$$
the required relation follows immediately. This construction is an explicit implementation of the conjugate-pairing principle.

For the real blocks, we proceed separately. The matrix $A_1$ admits a real singular value decomposition
\[
A_1 = U_1 \Sigma_1 V_1^T,
\]
with $U_1$ and $V_1$ orthogonal. If $p$ is even, the same holds for $A_{\frac{p+2}{2}}$.

Define the block diagonal matrices
\[
U = \mathrm{diag}(U_1,\ldots,U_p), \quad
\Sigma = \mathrm{diag}(\Sigma_1,\ldots,\Sigma_p), \quad
V = \mathrm{diag}(V_1,\ldots,V_p).
\]
By construction, $\mathrm{diag}(A_1,\ldots,A_p) = U \Sigma V^\ast$.

Let $\cU$, $\cS$ and $\cV$ be the tensors defined by $\bcirc(\cU) = (F_p^* \otimes I_n) U (F_p \otimes I_n)$, $\bcirc(\cS) = (F_p^* \otimes I_n) \Sigma (F_p \otimes I_n)$ and $\bcirc(\cV) = (F_p^* \otimes I_n) V  (F_p \otimes I_n)$, respectively.

By construction, the block data satisfy the conjugate-pairing conditions of Lemma \ref{lem:realtensor}. It follows that $\cU$, $\cS$, and $\cV$ are real-valued tensors. Moreover, since each $\Sigma_k$ is diagonal, $\cS$ is $f$-diagonal.

Since each $U_k$ and $V_k$ is unitary, the block diagonal matrices $U$ and $V$ are unitary. As $F_p$ is unitary, the matrix $(F_p \otimes I_n)$ is also unitary, and $(F_p \otimes I_n)^{-1}=(F_p^* \otimes I_n)$. We compute
\begin{align*}
\bcirc(\cU)\bcirc(\cU)^\ast
&=
(F_p^* \otimes I_n)U(F_p \otimes I_n)
(F_p^* \otimes I_n)U^\ast(F_p \otimes I_n)\\
&=
(F_p^* \otimes I_n)UU^\ast(F_p \otimes I_n)\\
&=I_{np}.
\end{align*}
Thus, $\bcirc(\cU)^\ast=\bcirc(\cU)^{-1}$ and $\bcirc(\cU)$ is a unitary matrix. Since $\cU$ is real-valued, this implies $\cU^T\ast\cU=\cI$ and therefore $\cU$ is an orthogonal tensor. The same argument shows that $\cV$ is also an orthogonal tensor.

Finally, from $A=U\Sigma V^\ast$, applying the inverse Fourier similarity yields
\[
(F_p^*\otimes I_n)A(F_p\otimes I_n)
=
(F_p^*\otimes I_n)U(F_p\otimes I_n)
(F_p^*\otimes I_n)\Sigma(F_p\otimes I_n)
(F_p^*\otimes I_n)V^\ast(F_p\otimes I_n).
\]
By the definitions of $\cU$, $\cS$, and $\cV$, this is precisely
$$\bcirc(\cA)=\bcirc(\cU)\bcirc(\cS)\bcirc(\cV)^\ast.$$
Since $\cV$ is real-valued, we have $\bcirc(\cV)^\ast=\bcirc(\cV^T)$, and therefore
 \begin{align*}
\bcirc(\cA)&=\bcirc(\cU)\bcirc(\cS)\bcirc(\cV^T)\\
&=\bcirc(\cU\ast\cS\ast\cV^T),
\end{align*}
where the last equality follows from Lemma \ref{lem:bcirc-hom}. As \(\bcirc\) is injective, it follows that $\cA=\mathcal U*\cS*\mathcal V^T$.
\endproof

\begin{remark}\label{rem:realtSVD} Theorem \ref{thm:realtSVD} extends verbatim to tensors
$\cA \in \mathbb{R}^{m\times n\times p}$. In that case, one obtains orthogonal tensors
$\cU \in \mathbb{R}^{m\times m\times p}$ and
$\cV \in \mathbb{R}^{n\times n\times p}$, together with an
$f$-diagonal tensor $\cS \in \mathbb{R}^{m\times n\times p}$, such that
$\cA = \cU \ast \cS \ast \cV^T$.
\end{remark}
\medskip

We next show that the same strategy yields a real Schur-type decomposition in the $t$-product setting (cf. \cite{ChenMaMiaoWei2023}).

 \begin{remark}
Recall first the classical Schur decomposition for matrices. Let $A\in\CC^{n\times n}$ be a square complex matrix. Then there exist a unitary matrix $U\in\CC^{n\times n}$ and an upper triangular matrix $T\in\CC^{n\times n}$ such that
\[
A = U T U^{*}
\]
and such that the diagonal entries of $T$ are precisely the eigenvalues of $A$.

In general, the matrices $U$ and $T$ are complex, even when $A$ is real. In order to remain in the real field one considers the real Schur decomposition. In this case, for a matrix $A\in\RR^{n\times n}$ there exist an orthogonal matrix $U\in\RR^{n\times n}$ and a quasi-triangular matrix $T\in\RR^{n\times n}$ such that
\[
A = U T U^{T}
\]
and such that the diagonal blocks of $T$ are of size $1\times 1$ and $2\times 2$. The $1\times 1$ blocks are the real eigenvalues of $A$, while each $2\times 2$ block, of the form $\bmx a&b\\-b&a\emx$ corresponds to a pair of complex conjugate eigenvalues $a\pm ib$ of $A$. Moreover, the diagonal blocks of $T$ can be arranged in an arbitrary order. In particular, one can choose a real Schur decomposition in which all $1\times 1$ blocks appear first, followed by the $2\times 2$ blocks.
\end{remark}

The real Schur decomposition motivates the tensor analogue in the framework of the $t$-product. Indeed, when working with real tensors, the block diagonalization obtained via the Fourier transform generally produces complex matrices that appear in conjugate pairs. As in the matrix case, it is therefore desirable to obtain a decomposition that remains entirely in the real field.

This leads to the notion of an \emph{$f$-quasi-triangular tensor}. A tensor $\cT\in\mathbb R^{n\times n\times p}$ is said to be $f$-quasi-triangular if each frontal slice $T^{(i)}$ is a quasi-triangular matrix, that is an upper block triangular matrix with diagonal blocks of size at most $2\times2$. Equivalently, there exists a partition $n=n_1+\cdots+n_m$ with $n_j\in\{1,2\}$ such that every $T^{(i)}$ is upper block triangular with respect to this fixed partition.

The following theorem shows that every real tensor admits such a decomposition through orthogonal similarity in the $t$-product sense, providing the tensor analogue of the real Schur decomposition.

\begin{theorem}[real $t$-Schur decomposition]
Let $\cA\in \mathbb{R}^{n \times n \times p}$. Then there exist an orthogonal tensor $\cU\in \mathbb{R}^{n \times n \times p}$ and an $f$-quasi-triangular tensor $\cT\in \mathbb{R}^{n \times n \times p}$ such that
\[
\cA=\cU\ast\cT\ast\cU^T .
\]
\end{theorem}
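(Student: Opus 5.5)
The plan is to imitate the proof of Theorem~\ref{thm:realtSVD}: apply the Schur decomposition blockwise in the Fourier domain, couple the choices across conjugate pairs of blocks, and then invert the transform. Write $\bcirc(\cA)=(F_p^*\otimes I_n)\diag(A_1,\dots,A_p)(F_p\otimes I_n)$. By Lemma~\ref{lem:conj-pairing}, $A_1$ is real, $A_{p-k+2}=\overline{A_k}$, and, when $p$ is even, $A_{\frac{p+2}{2}}$ is also real. Let $N=\{k\mid 2\le k<\tfrac{p+2}{2}\}$.

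For each $k\in N$ we take a complex Schur decomposition $A_k=U_kT_kU_k^*$, with $U_k$ unitary and $T_k$ upper triangular. We then set $U_{p-k+2}=\overline{U_k}$ and $T_{p-k+2}=\overline{T_k}$; conjugating the factorization shows that this is a Schur decomposition of $A_{p-k+2}$. For the real blocks we take real Schur decompositions $A_1=U_1T_1U_1^T$ and, if $p$ is even, $A_m=U_mT_mU_m^T$ with $m=\frac{p+2}{2}$. Here $U_1,U_m$ are orthogonal and $T_1,T_m$ are quasi-triangular. Setting $U=\diag(U_k)$ and $T=\diag(T_k)$, we define $\cU$ and $\cT$ through the inverse Fourier similarity. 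The blocks satisfy the pairing hypotheses of Lemma~\ref{lem:realtensor}, so $\cU$ and $\cT$ are real. Exactly the computation used for Theorem~\ref{thm:realtSVD} then shows that $\bcirc(\cU)$ is unitary, hence $\cU$ is orthogonal, and that $\bcirc(\cA)=\bcirc(\cU)\bcirc(\cT)\bcirc(\cU^T)$. Injectivity of $\bcirc$ gives $\cA=\cU*\cT*\cU^T$.

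The remaining point, and the main obstacle, is that $\cT$ must be $f$-quasi-triangular. Each frontal slice satisfies $T^{(i)}=\frac1p\sum_k\overline{\xi^{(i-1)(k-1)}}T_k$, so it suffices to find one partition $n=n_1+\cdots+n_r$ with $n_j\in\{1,2\}$ such that every $T_k$ is upper block triangular with respect to it. Upper block triangularity with respect to a fixed partition is preserved under linear combinations. The complex blocks are upper triangular, so they are block triangular for any partition. The difficulty lies only in the two real blocks $T_1$ and $T_m$, whose natural partitions may differ.

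To handle this, we use the freedom to order the diagonal blocks of a real Schur form. We choose $T_1$ with its $r_1$ blocks of size $1\times1$ first, followed by its $2\times2$ blocks, and likewise $T_m$ with $r_2$ blocks of size $1\times1$ first. Since $n-r_1$ and $n-r_2$ are both even, $r_1-r_2$ is even. Take the partition consisting of $\min(r_1,r_2)$ ones followed by $2$'s. For the block with the larger $r$, the extra $1\times1$ diagonal positions come in an even number starting at position $\min(r_1,r_2)+1$. They can therefore be grouped into consecutive $2\times2$ diagonal blocks, within which the matrix is upper triangular. Its genuine $2\times2$ blocks start at position $\max(r_1,r_2)+1$, which is aligned with the partition because the difference is even. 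Hence both real blocks, and therefore all $T_k$, are upper block triangular for this common partition, and $\cT$ is $f$-quasi-triangular.
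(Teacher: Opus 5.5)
Your proposal is correct and follows the paper's proof essentially step by step. You use blockwise Schur decompositions coupled across conjugate pairs, real Schur forms with the $1\times 1$ blocks ordered first for $A_1$ and $A_{\frac{p+2}{2}}$, and the observation that each frontal slice is a linear combination of the $T_k$. Your parity argument fills in the even-$p$ step, which the paper only asserts when it says any linear combination of $T_1$ and $T_{\frac{p+2}{2}}$ is quasi-triangular: since $r_1\equiv r_2\equiv n \pmod 2$, the partition with $\min(r_1,r_2)$ ones followed by $2$'s is a common coarsening of both block structures.
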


\proof Let $\cA$ be a real tensor of order $n\times n\times p$ and consider its Fourier block diagonalization
\begin{equation*}
\bcirc(\cA) = (F_p^* \otimes I_n) \diag(A_1,\dots,A_p) (F_p \otimes I_n).\end{equation*}

By Lemma \ref{lem:conj-pairing}, the Fourier blocks satisfy the conjugate-pairing relations: $A_1$ is real and $A_{p-k+2}=\overline{A_k}$ for all $k\in\{2,\dots,p\}$.

We construct Schur decompositions of the Fourier blocks in a way that is compatible with this pairing.

For the real block $A_1$, we take a real Schur decomposition
\[
A_1 = U_1 T_1 U_1^T,
\]
where $U_1$ is orthogonal and $T_1$ is quasi-triangular, with diagonal blocks of size at most $2 \times 2$, chosen so that all $1\times1$ blocks appear first, followed by the $2\times2$ blocks. If $p$ is even, the same construction applies to $A_{\frac{p+2}{2}}$.

 Let $N = \{\, k \in \mathbb{N} : 2 \le k < \tfrac{p+2}{2} \,\}$. For each $k \in N$, choose a (complex) Schur decomposition
\[
A_k = U_k T_k U_k^*,
\]
where $U_k$ is unitary and $T_k$ is upper triangular. We then define
\[
U_{p-k+2} = \overline{U_k}, \quad
T_{p-k+2} = \overline{T_k},
\]
which yields compatible Schur decompositions of the conjugate blocks.

This construction provides an explicit implementation of the conjugate-pairing principle.

Define the block diagonal matrices
$$U =
\diag(U_1,\dots,U_p),
\qquad
T =\diag(T_1,\dots,T_p).$$
By construction, $A_k = U_k T_k U_k^{*}$, for all $k=1,\dots,p$, and therefore
$$
\diag(A_1,\dots,A_p)
=
U T U^{*}.$$

We now define tensors $\cU$ and $\cT$ by
$$\mathrm{bcirc}(\cU)=
(F_p^{*}\otimes I_n)U(F_p\otimes I_n),
\qquad
\mathrm{bcirc}(\cT)
=
(F_p^{*}\otimes I_n)T(F_p\otimes I_n).$$

By construction, the block data satisfy the conjugate-pairing conditions of Lemma \ref{lem:realtensor}, and therefore $\cU$ and $\cT$ are real-valued tensors. 

Moreover, each $U_k$ is unitary, hence $U$ is unitary and therefore
$\bcirc(\cU)$ is unitary. Since $\cU$ is real-valued, it follows that
$\cU$ is an orthogonal tensor. 

Recall that the frontal slices $T^{(1)},\ldots, T^{(p)}$ of $\cT$ are obtained from the inverse Fourier transform,
namely $$T^{(i)}
=
\frac{1}{p}
\displaystyle\sum_{k=1}^{p}
\overline{\xi}^{(i-1)(k-1)} T_k,
\qquad i=1,\dots,p.$$

We begin by considering the case where $p$ is odd. By construction, $T_1$ is quasi-triangular, while $T_k$ is upper triangular for all $k\ge 2$. Since a linear combination of upper triangular matrices is again upper triangular, the matrices $$\displaystyle\sum_{k=2}^{p}\overline{\xi}^{(i-1)(k-1)}T_k$$
are upper triangular. Consequently, each frontal slice $T^{(i)}$ is obtained as the sum of a
quasi-triangular matrix and an upper triangular matrix. Observe that the $2\times2$ diagonal blocks of $T_1$ are preserved under scalar
multiplication and addition with upper triangular matrices. Hence each frontal slice $T^{(i)}$ is again
quasi-triangular with the same diagonal block structure as $T_1$.

Now suppose that $p$ is even. In this case, both $T_1$ and $T_{\frac{p+2}{2}}$ are quasi-triangular, with all $1\times1$ diagonal blocks first, followed by the $2\times2$ diagonal blocks. It follows that any linear combination of these two matrices is again quasi-triangular. Moreover, for $k\neq 1,\frac{p+2}{2}$, the matrices $T_k$ are upper triangular, and therefore any linear combination of such matrices is upper triangular. Hence each frontal slice $T^{(i)}$ can be written as the sum of a quasi-triangular matrix and an upper triangular matrix. This shows that each $T^{(i)}$ is again quasi-triangular.

Therefore, in both cases, the tensor $\mathcal T$ has quasi-triangular frontal slices, which means $\cT$ is $f$-quasi-triangular.

Finally, from
$$
\bcirc(\cA)=\bcirc(\cU)\,\bcirc(\cT)\,\bcirc(\cU)^{*}$$
and the injectivity of $\bcirc$, we conclude that $A = \cU\ast \cT \ast \cU^{T}.$
\endproof

The preceding argument highlights once more that real tensor structure is recovered globally through conjugate pairing in the Fourier domain.

\medskip

This Fourier-domain viewpoint is also the basis of the $t$-Jordan construction introduced in
\cite[Theorem~1]{MiaoQiWei2021}, where it is shown that any tensor
$\cA\in\mathbb{C}^{n\times n\times p}$ admits a factorization
\[
\cA=\mathcal{P}\ast\mathcal{J}\ast\mathcal{P}^{-1},
\]
with $\mathcal{P}$ t-invertible and $\mathcal{J}$ an $f$-upper-bi-diagonal tensor whose Fourier blocks are in Jordan canonical form. 

Immediately after this theorem, it is asserted that if a tensor $\cA \in \mathbb{R}^{n \times n \times p}$ is real, then its $t$-Jordan canonical form $\cJ$ is also real.

We now revisit this statement and show that, in general, such a construction need not preserve real-valuedness. The underlying issue is that the Jordan forms of the Fourier blocks are selected independently, without enforcing compatibility across conjugate pairs. This is illustrated by the following example.

Let $n=2$ and $p=4$, and consider the real tensor $\cA \in \mathbb{R}^{2 \times 2 \times 4}$ whose frontal slices are given by
\[
A^{(1)} =
\begin{pmatrix}
1 & -1\\
0 & -1
\end{pmatrix}, \quad
A^{(2)} =
\begin{pmatrix}
-1 & 0\\
-1 & 0
\end{pmatrix},
\]
\[
A^{(3)} =
\begin{pmatrix}
0 & -1\\
1 & 1
\end{pmatrix}, \quad
A^{(4)} =
\begin{pmatrix}
0 & 1\\
1 & 0
\end{pmatrix}.
\]

Let $F_4$ be the Fourier matrix, defined as in \eqref{eq:Fp}. Following the construction in the proof of Theorem 1 in \cite{MiaoQiWei2021}, we compute the matrices $B^{(k)}$ obtained via the Fourier block diagonalization of $\mathrm{bcirc}(\cA)$. A direct calculation yields
\[
B^{(1)} =
\begin{pmatrix}
0 & -1\\
1 & 0
\end{pmatrix}, \quad
B^{(2)} =
\begin{pmatrix}
1+i & i\\
-1+2i & -2
\end{pmatrix},
\]
\[
B^{(3)} =
\begin{pmatrix}
2 & -3\\
1 & 0
\end{pmatrix}, \quad
B^{(4)} =
\begin{pmatrix}
1-i & -i\\
-1-2i & -2
\end{pmatrix}.
\]

We now compute the Jordan canonical form of each $B^{(k)}$. Since all matrices are diagonalizable, we obtain
\[
C^{(1)} =
\begin{pmatrix}
-i & 0\\
0 & i
\end{pmatrix}, \quad
C^{(2)} =
\begin{pmatrix}
-1 & 0\\
0 & i
\end{pmatrix},
\]
\[
C^{(3)} =
\begin{pmatrix}
1+\sqrt{2}i & 0\\
0 & 1-\sqrt{2}i
\end{pmatrix}, \quad
C^{(4)} =
\begin{pmatrix}
-1 & 0\\
0 & -i
\end{pmatrix}.
\]

According to equation (3) in \cite{MiaoQiWei2021}, the frontal slice $J^{(1)} $ of the tensor $\cJ$ is given by
\[
J^{(1)} = \frac{1}{4}\left(C^{(1)} + C^{(2)} + C^{(3)} + C^{(4)}\right).
\]

Substituting the expressions above, we obtain
\[
J^{(1)} =
\frac{1}{4}
\begin{pmatrix}
-1 + (\sqrt{2}-1)i & 0\\
0 & 1 + (1-\sqrt{2})i
\end{pmatrix},
\]
which has nonzero imaginary parts, and therefore is not real. Consequently, the tensor $\cJ$ obtained through this procedure is not real.

This example shows that, although the original tensor $\cA$ is real, the construction described in \cite{MiaoQiWei2021} does not, in general, preserve real-valuedness. The obstruction again lies in the lack of compatibility between the Jordan data associated with conjugate Fourier blocks. In order to remain within the real setting, one must therefore impose a conjugate-pairing condition on the Fourier-domain Jordan construction. This is analogous in spirit to the passage from the complex Jordan canonical form to the real Jordan canonical form in matrix theory, which we recall next for context.

\begin{remark}
Recall first the classical Jordan canonical form for matrices over $\mathbb{C}$. Given $A \in \mathbb{C}^{n \times n}$, there exists an invertible matrix $P \in \mathbb{C}^{n \times n}$ such that
\[
A = PJP^{-1},
\]
where $J$ is a block diagonal matrix, called the Jordan canonical form of $A$, consisting of Jordan blocks
\[
J = \diag\big(J_{m_1}(\lambda_1), \ldots, J_{m_k}(\lambda_k)\big),
\]
with each block of the form
\[
J_m(\lambda) =
\bmx
\lambda & 1 &        & 0 \\
        & \lambda & \ddots &   \\
        &         & \ddots & 1 \\
0       &         &        & \lambda
\emx \in \mathbb{C}^{m \times m}.
\]
The scalars $\lambda_i \in \mathbb{C}$ are the eigenvalues of $A$, and the sizes of the Jordan blocks are determined by the algebraic and geometric multiplicities of these eigenvalues. The Jordan canonical form is unique up to permutation of the blocks.

When $A$ is real, the matrices $P$ and $J$ above need not be real. To remain over $\mathbb{R}$, one considers the real Jordan canonical form. In this case, for $A \in \mathbb{R}^{n \times n}$, there exists an invertible matrix $P \in \mathbb{R}^{n \times n}$ such that
\[
A = PJP^{-1},
\]
where $J$ is a block diagonal matrix whose diagonal blocks are of two types:
\begin{itemize}
    \item real Jordan blocks corresponding to real eigenvalues $\lambda \in \mathbb{R}$,
    \[
    J_m(\lambda) =
    \bmx
    \lambda & 1 &        & 0 \\
            & \lambda & \ddots &   \\
            &         & \ddots & 1 \\
    0       &         &        & \lambda
    \emx,
    \]
    \item real blocks associated with complex conjugate eigenvalue pairs $a \pm ib$ $(b \neq 0)$, given by
    \[
    C_m(a,b) =
    \bmx
    D & I_2 &        & 0 \\
      & D   & \ddots &   \\
      &     & \ddots & I_2 \\
    0 &     &        & D
    \emx, \quad
    D =
    \bmx
    a & b \\
    -b & a
    \emx.
    \]
\end{itemize}
Since the real Jordan form is unique only up to permutation of its diagonal blocks, one may order them conveniently. In particular, one may assume that all blocks associated with real eigenvalues appear first, followed by those associated with complex conjugate pairs. This does not affect the canonical nature of the decomposition. Thus, the real Jordan canonical form provides a canonical representation entirely in real matrices, while encoding nonreal eigenvalues through $2\times2$ real blocks.
\end{remark}

Motivated by the preceding discussion, we now show how the conjugate-pairing principle yields a real $t$-Jordan canonical form.

We now formalize the block structure that appears in the real tensor Jordan-type decomposition.

A matrix $J \in \RR^{n\times n}$ is said to be \emph{upper block-bi-diagonal} if there exists a partition $n = n_1 + \dots + n_r$ ($n_j \in \{1,2\}$) such that $J$ is block upper triangular and only the diagonal blocks $J_{ii}$, of size $n_i \times n_i$, and the first superdiagonal blocks $J_{i,i+1}$, of size $n_i \times n_{i+1}$, may be nonzero. A tensor $\cJ$ is said to be \emph{$f$-upper-block-bi-diagonal} if each frontal slice is upper block-bi-diagonal.

\begin{theorem}[real $t$-Jordan canonical form]\label{thm:real-tjordan}
Let $\cA\in\RR^{n\times n\times p}$. Then there exist a $t$-invertible tensor
$\cP\in\RR^{n\times n\times p}$ and an $f$-upper-block-bi-diagonal tensor $\cJ\in\RR^{n\times n\times p}$ such that
$$\cA=\cP*\cJ*\cP^{-1}.$$
Moreover, each frontal slice of $\cJ$ is upper block-bi-diagonal, with diagonal and first superdiagonal blocks of size at most $2\times2$.
\end{theorem}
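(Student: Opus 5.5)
The plan is to follow the proof of the real $t$-Schur decomposition, replacing Schur forms by Jordan forms. I write
\[
\bcirc(\cA)=(F_p^*\otimes I_n)\diag(A_1,\dots,A_p)(F_p\otimes I_n),
\]
so that by Lemma~\ref{lem:conj-pairing} $A_1$ is real, $A_{p-k+2}=\overline{A_k}$, and, for even $p$, $A_{\frac{p+2}{2}}$ is real as well.

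\textbf{Fourier-domain data.} Let $N=\{k:2\le k<\frac{p+2}{2}\}$.
\begin{itemize}
\item For each $k\in N$, I choose a complex Jordan decomposition $A_k=P_kJ_kP_k^{-1}$ and set $P_{p-k+2}=\overline{P_k}$ and $J_{p-k+2}=\overline{J_k}$. Then $\overline{A_k}=\overline{P_k}\,\overline{J_k}\,\overline{P_k}^{-1}$, so these pairs are compatible.
\item For the real block(s), I take real Jordan forms $A_1=P_1J_1P_1^{-1}$ (and $A_{\frac{p+2}{2}}=P_{\frac{p+2}{2}}J_{\frac{p+2}{2}}P_{\frac{p+2}{2}}^{-1}$), with $P_1,P_{\frac{p+2}{2}}$ real. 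In each, all blocks for real eigenvalues come first, followed by the $C_m(a,b)$ blocks.
\end{itemize}
I define $\cP$ and $\cJ$ through $\bcirc(\cP)=(F_p^*\otimes I_n)\diag(P_1,\dots,P_p)(F_p\otimes I_n)$, and similarly for $\cJ$. Lemma~\ref{lem:realtensor} then gives that $\cP$ and $\cJ$ are real. I also define $\cQ$ from the blocks $P_k^{-1}$. These blocks satisfy the pairing as well, since $\overline{P_k}^{-1}=\overline{P_k^{-1}}$, so $\cQ$ is real. Moreover $\bcirc(\cP)\bcirc(\cQ)=I_{np}$, so Lemma~\ref{lem:bcirc-hom} and injectivity of $\bcirc$ give $\cQ=\cP^{-1}$. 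The identity $\bcirc(\cA)=\bcirc(\cP)\bcirc(\cJ)\bcirc(\cP)^{-1}$ then yields $\cA=\cP*\cJ*\cP^{-1}$, exactly as in the earlier proofs.

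\textbf{Main obstacle: the frontal slices of $\cJ$.} Each slice is $J^{(i)}=\frac1p\sum_k\overline{\xi}^{(i-1)(k-1)}J_k$, a linear combination of all the $J_k$. So I need a \emph{single} partition $n=n_1+\dots+n_r$, with $n_j\in\{1,2\}$, with respect to which every $J_k$ is upper block-bi-diagonal. Once such a partition exists, the set of matrices with that pattern is a linear subspace, and the conclusion is immediate. Two observations handle this.
\begin{itemize}
\item A scalar upper bidiagonal matrix is upper block-bi-diagonal for \emph{every} partition into parts of size $1$ or $2$. Each entry $(j,j+1)$ lies either inside a diagonal block or inside the first superdiagonal block. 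This covers all complex $J_k$ with $k\ne1,\frac{p+2}{2}$.
\item The real blocks can have mismatched partitions. Let $r_1$ and $r_2$ be the numbers of positions occupied by real-eigenvalue Jordan blocks in $J_1$ and $J_{\frac{p+2}{2}}$, and assume without loss of generality $r_1\le r_2$. Nonreal eigenvalues of a real matrix come in conjugate pairs, so $r_1\equiv r_2\equiv n \pmod 2$.
\end{itemize}
I take the partition consisting of $r_1$ parts of size $1$ followed by $(n-r_1)/2$ parts of size $2$.
\begin{itemize}
\item For $J_1$, this is exactly the natural real Jordan partition.
\item For $J_{\frac{p+2}{2}}$, positions $r_1+1,\dots,r_2$ form a scalar upper bidiagonal real Jordan part, which fits the $2\times2$ grouping by the first observation. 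Because $r_2-r_1$ is even, its $2\times2$ blocks $D$ and $I_2$ start at position $r_2+1$ and align with the chosen pairs.
\end{itemize}
Hence all $J_k$ share one admissible pattern, and so every frontal slice of $\cJ$ is upper block-bi-diagonal with diagonal and first superdiagonal blocks of size at most $2\times2$.
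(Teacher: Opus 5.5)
Your proposal is correct and follows the paper's proof essentially step by step. Both arguments use conjugate-paired complex Jordan decompositions for $k\in N$, real Jordan forms with the real-eigenvalue blocks first for the real Fourier block(s), reality via Lemma~\ref{lem:realtensor}, and closure of the block-bi-diagonal pattern under linear combinations of the $J_k$. Your common partition ($r_1$ singletons followed by pairs, aligned because $r_1\equiv r_2\pmod 2$) and your explicit check that $\cP^{-1}$ is real via the blocks $P_k^{-1}$ make rigorous two points the paper only asserts: the even-$p$ linear-combination claim and the final inversion step.
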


\begin{proof}
Consider the factorization \begin{equation*}
\bcirc(\cA) = (F_p^* \otimes I_n) \diag(A_1,\ldots,A_p) (F_p \otimes I_n)\end{equation*}
given in Lemma \ref{lem:bcirc-dft}.

By Lemma \ref{lem:conj-pairing}, $A_1$ is real and $A_{p-k+2}=\overline{A_k}$ for all $k\in\{2,\dots,p\}$.

Set $N=\{\,k\in\NN: 2\le k<\tfrac{p+2}{2}\,\}$. For each $k\in N$, choose a Jordan decomposition
\[
A_k=P_k\,J_k\,P_k^{-1},
\]
with $J_k$ in Jordan canonical form over $\CC$. Then
\[
A_{p-k+2}=\overline{A_k}=\overline{P_k}\,\overline{J_k}\,(\overline{P_k})^{-1}
\]
is a Jordan decomposition of $A_{p-k+2}$. Let $P_{p-k+2}=\overline{P_k}$ and $J_{p-k+2}=\overline{J_k}$. This construction provides an explicit implementation of the conjugate-pairing principle.

For the real block(s), namely $k=1$ and, when $p$ is even, also $k=\frac{p+2}{2}$, choose real Jordan decompositions
\[
A_k=P_kJ_kP_k^{-1},
\qquad
P_k,J_k\in\RR^{n\times n},
\]
with $J_k$ in real Jordan canonical form, ordered so that the blocks associated with real eigenvalues precede those associated with complex conjugate pairs.

Define
\[
P=\diag(P_1,\ldots,P_p),
\qquad
J=\diag(J_1,\ldots,J_p).
\]
Then
\[
\diag(A_1,\ldots,A_p)=PJP^{-1}.
\]
Now define tensors $\cP,\cJ\in\CC^{n\times n\times p}$ by
\[
\bcirc(\cP)=(F_p^*\otimes I_n)\,P\,(F_p\otimes I_n),
\qquad
\bcirc(\cJ)=(F_p^*\otimes I_n)\,J\,(F_p\otimes I_n).
\]
By construction, the block data satisfy the conjugate-pairing conditions of Lemma \ref{lem:conj-pairing}, and therefore $\cP$ and $\cJ$ are real-valued tensors.

We now show that $\cJ$ is $f$-upper-block-bi-diagonal. Its frontal slices satisfy
\[
J^{(i)}=\frac1p\sum_{k=1}^{p}\overline{\xi}^{(i-1)(k-1)}J_k,
\qquad i=1,\dots,p.
\]

Assume first that $p$ is odd. Then $J_1$ is in real Jordan canonical form and is therefore upper block-bi-diagonal, with diagonal and first superdiagonal blocks of size at most $2\times2$. More precisely, $J_1$ is either upper bi-diagonal or a direct sum of an upper bi-diagonal matrix and an upper block-bi-diagonal matrix with $2\times2$ diagonal and first superdiagonal blocks. For every $k\ge2$, the matrix $J_k$ is in Jordan canonical form over $\CC$ and hence upper bi-diagonal. Therefore, $\displaystyle\sum_{k=2}^p \alpha_k J_k$ is upper bi-diagonal for any scalars $\alpha_k$, and adding such a matrix to $\alpha_1 J_1$ cannot create nonzero entries outside the diagonal and first superdiagonal block positions already present in $J_1$, for any scalar $\alpha_1$. It follows that each frontal slice $J^{(i)}$ is upper block-bi-diagonal, with diagonal and first superdiagonal blocks of size at most $2\times2$.

Now assume that $p$ is even. In this case, both $J_1$ and $J_{\frac{p+2}{2}}$ are in real Jordan canonical form. Hence each is either upper bi-diagonal or a direct sum of an upper bi-diagonal matrix and an upper block-bi-diagonal matrix with $2\times2$ diagonal and first superdiagonal blocks. Consequently, any linear combination of $J_1$ and $J_{\frac{p+2}{2}}$ is again upper block-bi-diagonal, with diagonal and first superdiagonal blocks of size at most $2\times2$. Moreover, for $k\neq 1,\frac{p+2}{2}$, each $J_k$ is upper bi-diagonal, and so any linear combination of such matrices is again upper bi-diagonal. Therefore, each frontal slice $J^{(i)}$ is upper block-bi-diagonal, with diagonal and first superdiagonal blocks of size at most $2\times2$.

As $\bcirc(\cP)$ is invertible, the tensor $\cP$ is $t$-invertible. Moreover, Lemma \ref{lem:bcirc-dft} gives
\begin{align*}
\bcirc(\cA)
&=(F_p^*\otimes I_n)\,\diag(A_1,\ldots,A_p)\,(F_p\otimes I_n) \\
&=\bcirc(\cP)\,\bcirc(\cJ)\,\bcirc(\cP)^{-1} \\
&=\bcirc(\cP*\cJ*\cP^{-1}).
\end{align*}
The injectivity of \(\bcirc\) therefore yields
\[
\cA=\cP*\cJ*\cP^{-1}.
\]
\end{proof}

\begin{remark}
The block structure obtained in Theorem \ref{thm:real-tjordan} is consistent with that of the real Jordan canonical form for matrices, in the sense that only blocks of size $1\times 1$ and $2\times 2$ appear. 

This analogy, however, is purely structural. In the matrix case, the real Jordan canonical form is a spectral decomposition, with blocks corresponding directly to real eigenvalues and complex conjugate pairs. In contrast, the tensor construction arises from a conjugate-pairing of Fourier-domain Jordan data, and not from a direct spectral decomposition of $\cA$.

Accordingly, the frontal slices of $\cJ$ are linear combinations of Jordan-form matrices arising in the Fourier domain, and therefore do not in general retain a Jordan structure. Thus, while the construction preserves a real block structure of size at most $2\times 2$, it
should not be interpreted as a spectral decomposition in the classical matrix sense.
\end{remark}

\medskip
We conclude the section with a factorization through a real idempotent tensor. Once again, the
construction is carried out at the level of the Fourier blocks and then coupled across conjugate
pairs so as to ensure that the resulting tensors remain real-valued.

\begin{theorem}[real idempotent tensor]\label{thm:rank-factor-idempotent}
Let $\cA \in \RR^{n\times n\times p}$. Then there exist invertible real tensors
$\cU,\cV\in\RR^{n\times n\times p}$ and a real idempotent tensor $\cE\in\RR^{n\times n\times p}$ such that
\[
\cA=\cU*\cE*\cV.
\]
\end{theorem}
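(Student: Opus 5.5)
We follow the same Fourier-domain route used for Theorems~\ref{thm:realtSVD} and \ref{thm:real-tjordan}: build the factorization blockwise, enforce compatibility across conjugate pairs, and then invert the transform. Write
\[
\bcirc(\cA)=(F_p^*\otimes I_n)\diag(A_1,\dots,A_p)(F_p\otimes I_n).
\]
By Lemma~\ref{lem:conj-pairing}, $A_1$ is real, and $A_{p-k+2}=\overline{A_k}$ for $k\ge 2$. When $p$ is even, $A_{\frac{p+2}{2}}$ is real as well.

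At the matrix level we use rank normal form. Every $M\in\CC^{n\times n}$ of rank $r$ can be written as $M=U E_r V$, where $U,V$ are invertible and $E_r=\diag(I_r,0)$. When $M$ is real, $U$ and $V$ may be taken real by Gaussian elimination over $\RR$. The blocks are handled as follows.
\begin{itemize}
\item For the real blocks, $k=1$ and, if $p$ is even, $k=\frac{p+2}{2}$, we choose real invertible $U_k,V_k$ with $A_k=U_kE_{r_k}V_k$.
\item For $k\in N=\{k:2\le k<\frac{p+2}{2}\}$, we choose complex invertible $U_k,V_k$ with $A_k=U_kE_{r_k}V_k$.
\item We then set $U_{p-k+2}=\overline{U_k}$, $V_{p-k+2}=\overline{V_k}$ and $E_{p-k+2}=E_{r_k}=\overline{E_{r_k}}$.
\end{itemize}
This is legitimate because conjugation preserves rank, so $r_{p-k+2}=r_k$, and $\overline{A_k}=\overline{U_k}\,E_{r_k}\,\overline{V_k}$.

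Next define block-diagonal matrices
\[
U=\diag(U_k),\qquad E=\diag(E_{r_k}),\qquad V=\diag(V_k).
\]
Define tensors $\cU,\cE,\cV$ by $\bcirc(\cdot)=(F_p^*\otimes I_n)(\cdot)(F_p\otimes I_n)$ applied to $U$, $E$ and $V$ respectively. All three families of blocks satisfy the pairing hypotheses of Lemma~\ref{lem:realtensor}: the first block is real and the $(p-k+2)$-th block is the conjugate of the $k$-th. Hence $\cU,\cE,\cV$ are real.

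We now verify the required properties.
\begin{itemize}
\item \emph{Invertibility.} $\bcirc(\cU)$ is similar to the invertible matrix $U$, so it is invertible. We define $\cU^{-1}$ through $\diag(U_k^{-1})$. These blocks also satisfy the pairing, since $\overline{U_k}^{-1}=\overline{U_k^{-1}}$, so $\cU^{-1}$ is real. Then $\cU*\cU^{-1}=\cI$ follows from Lemma~\ref{lem:bcirc-hom} and injectivity of $\bcirc$. The same argument applies to $\cV$.
\item \emph{Idempotency.} $E^2=E$ blockwise, so $\bcirc(\cE)^2=\bcirc(\cE)$. Since $\bcirc(\cE*\cE)=\bcirc(\cE)^2$, injectivity gives $\cE*\cE=\cE$.
\item \emph{Factorization.} Since $\diag(A_k)=UEV$, we get $\bcirc(\cA)=\bcirc(\cU)\bcirc(\cE)\bcirc(\cV)=\bcirc(\cU*\cE*\cV)$, and injectivity yields $\cA=\cU*\cE*\cV$.
\end{itemize}

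The only delicate point is that a single idempotent block must serve both members of a conjugate pair. This works because $E_{r_k}$ is real and the ranks of $A_k$ and $\overline{A_k}$ coincide. A secondary point is that the real blocks need real factors $U_k,V_k$, which real row and column reduction supplies. Note that $\cE$ need not be $f$-diagonal, since the ranks $r_k$ vary with $k$. However, $\cE$ is real and idempotent, which is all the statement requires.
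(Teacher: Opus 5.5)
Your proof is correct and is essentially the paper's argument: rank normal form $A_k=U_kE_{r_k}V_k$ on the Fourier blocks, real factors on the real blocks, conjugated factors on the mirrored blocks, then Lemma~\ref{lem:realtensor} and injectivity of $\bcirc$. Your explicit check that $\cU^{-1}$ is real, via $\overline{U_k}^{-1}=\overline{U_k^{-1}}$, is a small welcome addition. One correction to your closing aside: $\cE$ is in fact always $f$-diagonal. Each $E_{r_k}=\diag(I_{r_k},0)$ is diagonal, and every frontal slice of $\cE$ is a linear combination of these. What fails in general is only that the slices have the $0/1$ form $\diag(I_r,0)$.
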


\begin{proof}
Let $\cA \in \RR^{n\times n\times p}$ and consider its Fourier block diagonalization
\begin{equation}\label{eq:fft-blockdiag-A-idem}
\bcirc(\cA)=(F_p^*\otimes I_n)\diag(A_1,\dots,A_p)(F_p\otimes I_n).
\end{equation}
By Lemma~\ref{lem:conj-pairing}, the Fourier blocks satisfy the conjugate-pairing relations: $A_1$ is real and $A_{p-k+2}=\overline{A_k}$ for all $k \in \{2,\dots,p\}$.

We construct the factorization blockwise in the Fourier domain in a way that is compatible with this pairing. Let $N=\{k\in\NN\mid 2\le k<\tfrac{p+2}{2}\}$. For each $k\in N$, set
$r_k=\rank(A_k)$. By the rank normal form (see, e.g., \cite[Sec.~0.4.6(f)]{HornJohnson2013}), there exist invertible matrices
$U_k,V_k\in\CC^{n\times n}$ such that
\begin{equation}\label{eq:rank-normal-form-idem}
U^{-1}_kA_kV^{-1}_k=\begin{bmatrix}I_{r_k}&0\\[2pt]0&0\end{bmatrix}=E_k.
\end{equation}
Equivalently, $A_k=U_kE_kV_k$, and $E_k^2=E_k$. We then define
\[
U_{p-k+2}=\overline{U_k}, \quad
V_{p-k+2}=\overline{V_k}, \quad
E_{p-k+2}=\overline{E_k},
\]
which yields compatible factorizations of the conjugate blocks.

For the real block $A_1$, we choose invertible $U_1,V_1\in\RR^{n\times n}$ so that
\eqref{eq:rank-normal-form-idem} holds. If $p$ is even, the same applies to $A_{\frac{p+2}{2}}$.

Now define block diagonal matrices
\[
U=\diag(U_1,\ldots,U_p),\qquad
V=\diag(V_1,\ldots,V_p),\qquad
E=\diag(E_1,\ldots,E_p),
\]
so that
\begin{equation}\label{eq:block-factorization-idem}
\diag(A_1,\ldots,A_p)=UEV,\qquad\text{and}\qquad E^2=E.
\end{equation}

We now define tensors $\cU,\cV,\cE$ by
\[
\bcirc(\cU)=(F_p^*\otimes I_n)U(F_p\otimes I_n), \quad
\bcirc(\cV)=(F_p^*\otimes I_n)V(F_p\otimes I_n), \quad
\bcirc(\cE)=(F_p^*\otimes I_n)E(F_p\otimes I_n).
\]

By construction, all the Fourier blocks above satisfy the conjugate-pairing conditions of Lemma~\ref{lem:realtensor}, and therefore $\cU$, $\cV$, and $\cE$ are real-valued tensors. Moreover, $\bcirc(\cU)$ and $\bcirc(\cV)$ are
invertible, so $\cU$ and $\cV$ are $t$-invertible.

Finally, combining \eqref{eq:fft-blockdiag-A-idem} and \eqref{eq:block-factorization-idem} yields
\[
\bcirc(\cA)=\bcirc(\cU)\,\bcirc(\cE)\,\bcirc(\cV).
\]
Applying $\bcirc^{-1}$ and Lemma~\ref{lem:bcirc-hom} gives $\cA=\cU*\cE*\cV$. Since $\bcirc(\cE)^2=\bcirc(\cE)$ and
$\bcirc$ is injective, we conclude $\cE*\cE=\cE$.
\end{proof}

The results of this section show that the real tensor setting requires more than a formal transfer of matrix factorizations to the Fourier domain. What is essential is that the matrix-level constructions be chosen compatibly across conjugate Fourier blocks. In the next section, we show that the same principle also governs the existence and construction of real generalized inverses.

\section{Generalized inverses of real tensors}\label{sec:g-inverses}

We now turn to generalized inverses in the $t$-product setting. The constructions developed in the previous section apply naturally to this context, as the defining identities of generalized inverses are expressed in terms of algebraic relations.

The main point in the real setting is that, although generalized inverses are constructed at the level of the Fourier blocks, the resulting tensor is real-valued only if the corresponding block data are chosen compatibly across conjugate pairs. This is precisely the mechanism encoded by the conjugate-pairing principle.

A ring element is said to be  \emph{unit regular} if it has an invertible von Neumann inverse. A ring is said to be \emph{unit regular} if every element is unit regular \cite{ChenBook}.

The following result is a direct consequence of the factorization established in Theorem \ref{thm:rank-factor-idempotent}.

\begin{theorem}\label{thm:unit-regular}
The ring $(\RR^{n\times n\times p},+,*)$ is unit regular.
\end{theorem}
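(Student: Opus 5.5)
We deduce unit regularity directly from Theorem~\ref{thm:rank-factor-idempotent}. Fix $\cA\in\RR^{n\times n\times p}$ and write $\cA=\cU*\cE*\cV$, where $\cU,\cV\in\RR^{n\times n\times p}$ are $t$-invertible real tensors and $\cE$ is a real idempotent tensor. The candidate invertible von Neumann inverse is
\[
\cX=\cV^{-1}*\cU^{-1}.
\]
It is real-valued: the $t$-inverse of a real $t$-invertible tensor is real, since by Lemma~\ref{lem:bcirc-hom} we have $\bcirc(\cU^{-1})=\bcirc(\cU)^{-1}$, and the inverse of a real matrix is real. The tensor $\cX$ is $t$-invertible, with inverse $\cU*\cV$, because products of units in a ring are units.

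The verification is a ring-level computation, using associativity of $*$ (inherited from matrix multiplication via the monomorphism $\bcirc$):
\[
\cA*\cX*\cA=\cU*\cE*\cV*\cV^{-1}*\cU^{-1}*\cU*\cE*\cV=\cU*\cE*\cE*\cV=\cU*\cE*\cV=\cA,
\]
where the idempotency $\cE*\cE=\cE$ gives the third equality. Hence every element of $\RR^{n\times n\times p}$ has an invertible von Neumann inverse, and the ring is unit regular.

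I expect no serious obstacle here; the substantive work, namely realness of $\cU$, $\cV$, $\cE$ via conjugate pairing, was already done in Theorem~\ref{thm:rank-factor-idempotent}. The only point worth stating explicitly is that the inverses of $\cU$ and $\cV$ stay within the real ring, so that $\cX\in\RR^{n\times n\times p}$. This follows from Lemma~\ref{lem:bcirc-hom}(2). Alternatively, it follows from the conjugate-pairing principle (Lemma~\ref{lem:reality-transfer}) applied to $\Phi(M)=M^{-1}$, which satisfies $\Phi(\overline{M})=\overline{\Phi(M)}$.
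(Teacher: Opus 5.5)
Your proposal is correct and follows the paper's proof essentially verbatim. Both take the factorization $\cA=\cU*\cE*\cV$ from Theorem~\ref{thm:rank-factor-idempotent}, use the unit $\cV^{-1}*\cU^{-1}$, and invoke $\cE*\cE=\cE$ to obtain $\cA*(\cV^{-1}*\cU^{-1})*\cA=\cA$. Your explicit check that $\cV^{-1}*\cU^{-1}$ stays real via Lemma~\ref{lem:bcirc-hom} is a small point the paper leaves implicit.
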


\begin{proof}
Let $\cA\in\RR^{n\times n\times p}$. By Theorem \ref{thm:rank-factor-idempotent}, there exist $t$-invertible tensors
$\cU,\cV\in\RR^{n\times n\times p}$ and an idempotent tensor $\cE\in\RR^{n\times n\times p}$ such that
\[
\cA=\cU*\cE*\cV.
\]
Since $\cE^2=\cE$, we have $\cE=\cE*\cI*\cE$. Hence, $\cI\in\cE\{1\}$ and $\cE$ is unit regular. Let
\[
\cW=\cV^{-1}*\cU^{-1}.
\]
Then, $\cW$ is a unit and
\begin{align*}
\cA*\cW*\cA
&=(\cU*\cE*\cV)(\cV^{-1}*\cU^{-1})(\cU*\cE*\cV) \\
&=\cU*(\cE*\cI*\cE)*\cV \\
&=\cU*\cE*\cV \\
&=\cA.
\end{align*}
Therefore $\cA$ is unit regular. Since $\cA$ was arbitrary, the ring $(\RR^{n\times n\times p},+,*)$ is unit regular.
\end{proof}

%

Let $\cA\in\RR^{m\times n\times p}$ and let
\[
\cA=\cU*\cS*\cV^T
\]
be a real $t$-SVD of $\cA$, where $\cU\in\RR^{m\times m\times p}$ and $\cV\in\RR^{n\times n\times p}$ are orthogonal and
$\cS\in\RR^{m\times n\times p}$ is $f$-diagonal, chosen as described in the proof of
Theorem~\ref{thm:realtSVD} (see also Remark~\ref{rem:realtSVD}).

The tensor $\cS$ is defined by
\begin{equation}\label{eq:moorepenrose1}
\bcirc(\cS)
=
(F_p^*\otimes I_m)\diag(\Sigma_1,\ldots,\Sigma_p)(F_p\otimes I_n),
\end{equation}
where each $\Sigma_k$ is a diagonal matrix with nonnegative real entries. In particular,
the Fourier blocks $\Sigma_1,\ldots,\Sigma_p$ satisfy the conjugate-pairing conditions of
Lemma~\ref{lem:realtensor}. Moreover, each $\Sigma_k$ admits a real Moore--Penrose inverse
$\Sigma_k^\dagger$.

Let $N=\{k\in\NN\mid 2\le k<\tfrac{p+2}{2}\}$. By construction, for every $k\in N$ we have
$$\Sigma_{p-k+2}=\Sigma_k=\overline{\Sigma_k},$$
and therefore
$$\Sigma_{p-k+2}^\dagger=\Sigma_k^\dagger=\overline{\Sigma_k^\dagger}.$$

We now define the tensor $\cX$ by
\begin{equation}\label{eq:moorepenrose2}
\bcirc(\cX)=(F_p^*\otimes I_n)\diag(\Sigma_1^\dagger,\ldots,\Sigma_p^\dagger)(F_p\otimes I_m).
\end{equation}
Since the Fourier blocks in \eqref{eq:moorepenrose2} satisfy the conjugate-pairing
conditions of Lemma~\ref{lem:realtensor}, it follows that $\cX$ is real-valued.

Finally, by substituting \eqref{eq:moorepenrose1} and \eqref{eq:moorepenrose2} into the
Penrose equations and applying Lemma~\ref{lem:bcirc-hom}, the verification reduces
to the corresponding identities for the Fourier blocks. Since $\Sigma_k^\dagger$ is the
Moore--Penrose inverse of $\Sigma_k$ for each $k=1,\dots,p$, it follows immediately that
$\cX$ is the Moore--Penrose inverse of $\cS$. Hence, $\cX=\cS^\dagger$.

\begin{theorem}\label{thm:mp-from-tsvd}
Let \(\mathcal A \in \mathbb R^{m\times n\times p}\), and let
\[
\mathcal A = \mathcal U \ast \mathcal S \ast \mathcal V^T
\]
be a real $t$-SVD of \(\mathcal A\), where \(\mathcal S\) is defined as in \eqref{eq:moorepenrose1}. Then the
Moore--Penrose inverse of \(\mathcal A\) with respect to the $t$-product is given by
\[
\mathcal A^\dagger = \mathcal V \ast \mathcal S^\dagger \ast \mathcal U^T.
\]
In particular, \(\mathcal A^\dagger \in \mathbb R^{n\times m\times p}\).
\end{theorem}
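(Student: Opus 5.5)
We will set $\cX=\cV*\cS^\dagger*\cU^T$ and check the four Penrose equations directly at the tensor level. The only facts needed are that $\cU,\cV$ are real orthogonal ($\cU^T*\cU=\cU*\cU^T=\cI$, and likewise for $\cV$), that $\cS^\dagger$ is the real Moore--Penrose inverse of $\cS$ constructed just before the statement, and the transpose rule $(\cA*\cB)^T=\cB^T*\cA^T$ from Lemma~\ref{lem:bcirc-hom}. Realness of $\cX$ is then immediate, since it is a $t$-product of real tensors.

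The four equations are checked in turn.

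\textbf{First and second equations.} We have
\[
\cA*\cX*\cA=\cU*\cS*(\cV^T*\cV)*\cS^\dagger*(\cU^T*\cU)*\cS*\cV^T=\cU*(\cS*\cS^\dagger*\cS)*\cV^T=\cA .
\]
The same telescoping gives $\cX*\cA*\cX=\cV*(\cS^\dagger*\cS*\cS^\dagger)*\cU^T=\cX$.

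\textbf{Third equation.} Here $\cA*\cX=\cU*(\cS*\cS^\dagger)*\cU^T$. By the transpose rule,
\[
(\cA*\cX)^T=\cU*(\cS*\cS^\dagger)^T*\cU^T=\cU*\cS*\cS^\dagger*\cU^T=\cA*\cX,
\]
where the middle step uses the symmetry of $\cS*\cS^\dagger$ coming from the Penrose equations for $\cS$. Note that $(\cU^T)^T=\cU$ holds because transposing twice restores the slice order.

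\textbf{Fourth equation.} The argument for $\cX*\cA=\cV*\cS^\dagger*\cS*\cV^T$ is identical.

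\textbf{Uniqueness.} Since the statement calls $\cA^\dagger$ ``the'' Moore--Penrose inverse, uniqueness is needed. The standard ring argument applies with involution $T$: if $\cX,\cY$ both satisfy the four equations, then $\cX=\cY$. Alternatively, uniqueness transfers through $\bcirc$. By Lemma~\ref{lem:bcirc-hom}, $\bcirc$ is an injective algebra map. For real tensors, $\bcirc(\cA^T)=\bcirc(\cA)^T$, which follows from the definition $\cA^T=\bcirc^{-1}(\bcirc(\cA)^T)$. So a tensor satisfying the four equations for $\cA$ maps to a matrix satisfying the matrix Penrose equations for $\bcirc(\cA)$, and matrix uniqueness gives tensor uniqueness.

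\textbf{Main obstacle.} The step needing care is the rectangular case $m\neq n$. Lemma~\ref{lem:bcirc-hom} is stated for square tensors, while here the products involve $m\times n$, $n\times m$, $m\times m$ and $n\times n$ tensors. I would first note that the same block-circulant identities, $\bcirc(\cA*\cB)=\bcirc(\cA)\bcirc(\cB)$ and $\bcirc(\cA^T)=\bcirc(\cA)^T$, hold verbatim for compatible rectangular sizes. This holds because the $t$-product is defined by $\fold(\bcirc(\cA)\unfold(\cB))$, and a block-circulant times a block-circulant is block-circulant. The reversal rule for transposes then follows by the same argument, and everything above goes through unchanged.
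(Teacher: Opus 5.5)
Your proposal is correct and follows the paper's proof essentially line for line. You set $\cX=\cV*\cS^\dagger*\cU^T$, telescope with $\cU^T*\cU=\cI$ and $\cV^T*\cV=\cI$ to get the first two Penrose equations, and use the $t$-symmetry of $\cS*\cS^\dagger$ and $\cS^\dagger*\cS$ with the transpose rule for the last two. Your explicit treatment of uniqueness and of the rectangular extension of Lemma~\ref{lem:bcirc-hom} fills in points the paper leaves implicit, without changing the argument.
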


\begin{proof}
Set $\cX=\cV*\cS^\dagger*\cU^T$. By associativity of the $t$-product and orthogonality of $\cU,\cV$, we compute
\[
\cA*\cX*\cA=\cU*(\cS*\cS^\dagger*\cS)*\cV^T=\cA,\qquad
\cX*\cA*\cX=\cV*(\cS^\dagger*\cS*\cS^\dagger)*\cU^T=\cX.
\]
Moreover, $\cA*\cX=\cU*(\cS*\cS^\dagger)*\cU^T$ and
$\cX*\cA=\cV*(\cS^\dagger*\cS)*\cV^T$. Since $\cS^\dagger$ is the Moore--Penrose inverse of $\cS$, both $\cS \ast \cS^\dagger$ and $\cS^\dagger \ast \cS$ are $t$-symmetric. Therefore, 
$(\cA*\cX)^T=\cA*\cX$ and $(\cX*\cA)^T=\cX*\cA$. Thus, $\cX$ satisfies the four Penrose equations, and hence
$\cX=\cA^\dagger$. Finally, $\cU,\cV$ and $\cS^\dagger$ are real-valued, so
$\cA^\dagger$ is also real-valued.
\end{proof}

\begin{remark}
The Moore--Penrose inverse also admits a direct Fourier-block description. Indeed, if
$$\bcirc(\cA)=(F_p^* \otimes I_m)\,\diag(A_1,\dots,A_p)\,(F_p \otimes I_n),$$
then
$$\bcirc(\mathcal A^\dagger)=
(F_p^* \otimes I_n)\,\diag(A_1^\dagger,\dots,A_p^\dagger)\,(F_p \otimes I_m),$$
where $A_k^\dagger$ denotes the matrix Moore--Penrose inverse of $A_k$, for
$k=1,\dots,p$.

If $\cA$ is real-valued, then Lemma \ref{lem:conj-pairing} gives $A_{p-k+2}=\overline{A_k}$, and hence
$A_{p-k+2}^\dagger = \overline{A_k^\dagger},$ so that Lemma \ref{lem:realtensor} implies that $\cA^\dagger$ is again real-valued.

The $t$-SVD formulation above is included because it provides an efficient computational tool for obtaining the Moore-Penrose inverse of a real-valued tensor.
\end{remark}

We now turn to the Drazin inverse. Its description is particularly natural in the Fourier domain, where the construction reduces blockwise to the corresponding matrix Drazin inverses, and real-valuedness is recovered through the conjugate-pairing principle.

We use the standard definition of the matrix Drazin inverse and its basic invariance properties (see, e.g., \cite[Sec.~7.2]{CampbellMeyer2009}).

\begin{theorem}\label{thm:drazin-fourier}
Let $\cA\in\RR^{n\times n\times p}$ with Fourier block diagonalization
\[
\bcirc(\cA)=(F_p^*\otimes I_n)\,\diag(A_1,\dots,A_p)\,(F_p\otimes I_n).
\]
Then the Drazin inverse of $\cA$, $\cA^D$, exists as a real tensor and satisfies
\[
\bcirc(\cA^D)=(F_p^*\otimes I_n)\,\diag(A_1^D,\dots,A_p^D)\,(F_p\otimes I_n).
\]
\end{theorem}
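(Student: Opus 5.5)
The plan is to build the candidate Drazin inverse blockwise in the Fourier domain, show that it is real by the conjugate-pairing principle, and then check the three defining identities through the homomorphism $\bcirc$.

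For each $k=1,\dots,p$, let $A_k^D$ be the matrix Drazin inverse of the Fourier block $A_k$. Define $\cX\in\CC^{n\times n\times p}$ by
\[
\bcirc(\cX)=(F_p^*\otimes I_n)\,\diag(A_1^D,\dots,A_p^D)\,(F_p\otimes I_n).
\]

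\textbf{Reality.} To see that $\cX$ is real, I would apply Lemma~\ref{lem:reality-transfer} with $\Phi(M)=M^D$. This requires the identity $\overline{M}^D=\overline{M^D}$. Conjugating the three defining equations of $M^D$, namely
\[
M^{k+1}M^D=M^k,\qquad M^DMM^D=M^D,\qquad MM^D=M^DM,
\]
shows that $\overline{M^D}$ satisfies the Drazin equations for $\overline{M}$ with the same index $k$. Uniqueness of the matrix Drazin inverse then gives $\overline{M}^D=\overline{M^D}$. In particular $A_1^D$ is real, and when $p$ is even so is $A_{(p+2)/2}^D$. Lemma~\ref{lem:realtensor} then yields that $\cX$ is real.

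\textbf{Verification of the Drazin equations.} Let $k_j=\mathrm{ind}(A_j)$ and set $k=\max_j k_j\le n$. For each block:
\begin{itemize}
\item $A_j^{k+1}A_j^D=A_j^{k}$, because the identity $A_j^{m+1}A_j^D=A_j^m$ persists for every $m\ge k_j$ (multiply the identity at $m=k_j$ by further powers of $A_j$ on the left);
\item $A_j^DA_jA_j^D=A_j^D$;
\item $A_jA_j^D=A_j^DA_j$.
\end{itemize}
Since $(F_p\otimes I_n)$ is unitary, $\bcirc(\cA)^m=(F_p^*\otimes I_n)\,\diag(A_1^m,\dots,A_p^m)\,(F_p\otimes I_n)$. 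Combining this with Lemma~\ref{lem:bcirc-hom}, namely $\bcirc(\cA^m)=\bcirc(\cA)^m$ and $\bcirc(\cA*\cX)=\bcirc(\cA)\bcirc(\cX)$, each tensor identity transforms to the blockwise identities above. Injectivity of $\bcirc$ then gives
\[
\cA^{k+1}*\cX=\cA^k,\qquad \cX*\cA*\cX=\cX,\qquad \cA*\cX=\cX*\cA.
\]

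\textbf{Uniqueness.} To conclude that $\cX$ is \emph{the} Drazin inverse, I would use uniqueness of the matrix Drazin inverse. Any tensor $\cY$ satisfying the three equations has $\bcirc(\cY)$ satisfying the matrix Drazin equations for $\bcirc(\cA)$. The same holds for $\bcirc(\cX)$, so $\bcirc(\cY)=\bcirc(\cX)$, and therefore $\cY=\cX=\cA^D$. This gives existence as a real tensor together with the stated Fourier representation.

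The main obstacle is mild bookkeeping: the blocks may have different indices, so one must take a common exponent $k$ and note that the Drazin equations persist for larger exponents. The conjugation identity also deserves care, but it follows cleanly from uniqueness of the matrix Drazin inverse.
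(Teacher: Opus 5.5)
Your proposal is correct and follows essentially the same route as the paper: you take blockwise Drazin inverses in the Fourier domain, transfer the Drazin equations through the monomorphism $\bcirc$, and obtain reality from the conjugate-pairing relations $A_{p-i+2}^D=\overline{A_i^D}$ together with Lemma~\ref{lem:realtensor}. The paper compresses the verification into ``similarity invariance'' plus $\diag(A_1,\dots,A_p)^D=\diag(A_1^D,\dots,A_p^D)$, whereas you check the equations directly with a common exponent $k=\max_j \mathrm{ind}(A_j)$ and also prove $\overline{M}^D=\overline{M^D}$ and uniqueness, details the paper leaves implicit.
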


\begin{proof}

By Lemma~\ref{lem:bcirc-hom}, $\bcirc$ is a ring monomorphism, hence it preserves the Drazin equations. Moreover, $\bcirc(\cA)$ is similar to $\diag(A_1,\dots,A_p)$, and the Drazin inverse is similarity invariant. Furthermore, $\diag(A_1,\dots,A_p)^D=\diag(A_1^D,\dots,A_p^D)$, yielding the displayed formula.

If $\cA$ is real, Lemma~\ref{lem:conj-pairing} gives $A_1$ real and $A_{p-i+2}=\overline{A_i}$ for $i\ge2$. Since $A_1^D$ is real and $A_{p-i+2}^D=\overline{A_i^D}$ for $i\ge2$, applying Lemma~\ref{lem:realtensor}, we obtain that $\cA^D$ is real-valued.
\end{proof}

We finally consider the group inverse. Its existence and structure are again determined blockwise in the Fourier domain, with real-valuedness ensured by the conjugate-pairing principle.

\begin{theorem}\label{thm:real-group-inv}
Let $\cA\in\RR^{n\times n\times p}$ and
\[
\bcirc(\cA)=(F_p^*\otimes I_n)\,\diag(A_1,\dots,A_p)\,(F_p\otimes I_n).
\]
Then $\cA$ has a \emph{real-valued} group inverse $\cA^\#$ if and only if every $A_i$ is group invertible.
In that case,
\[
\bcirc(\cA^\#)=(F_p^*\otimes I_n)\,\diag(A_1^\#,\dots,A_p^\#)\,(F_p\otimes I_n).
\]
\end{theorem}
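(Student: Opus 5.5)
The plan is to transport the group-inverse equations through the algebra monomorphism $\bcirc$ and the Fourier similarity of Lemma~\ref{lem:bcirc-dft}. We then reduce existence to a blockwise matrix statement, and recover real-valuedness from Lemma~\ref{lem:conj-pairing} and Lemma~\ref{lem:realtensor}, mirroring the proof of Theorem~\ref{thm:drazin-fourier}.

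For the direction ``$\Rightarrow$'', suppose $\cA^\#$ exists. Applying $\bcirc$ and Lemma~\ref{lem:bcirc-hom} shows that $X=\bcirc(\cA^\#)$ satisfies the three group-inverse equations for $M=\bcirc(\cA)$. Conjugating by the unitary $F_p\otimes I_n$, the matrix $Y=(F_p\otimes I_n)X(F_p^*\otimes I_n)$ is a group inverse of $D=\diag(A_1,\dots,A_p)$.

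The one point needing care is that $Y$ need not be assumed block diagonal. For this I would use the standard fact that a group inverse, when it exists, is a polynomial in the matrix: $Y$ commutes with $D$, and $Y=D Y^2$. Alternatively, I would use the characterization ``$D$ is group invertible iff $\rank(D)=\rank(D^2)$''. Since $\rank(D)=\sum_k\rank(A_k)$, $\rank(D^2)=\sum_k\rank(A_k^2)$, and always $\rank(A_k^2)\le\rank(A_k)$, equality forces $\rank(A_k^2)=\rank(A_k)$ for every $k$. Hence each $A_k$ is group invertible. I expect this rank argument to be the cleanest route and the only real obstacle.

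For the direction ``$\Leftarrow$'', assume every $A_k$ is group invertible. Then $\diag(A_1^\#,\dots,A_p^\#)$ satisfies the three equations blockwise, so it is the group inverse of $D$. Define $\cX$ by
\[
\bcirc(\cX)=(F_p^*\otimes I_n)\,\diag(A_1^\#,\dots,A_p^\#)\,(F_p\otimes I_n).
\]
Before this matrix can be written as $\bcirc$ of some tensor, I must check that it is block circulant. I would get this from Lemma~\ref{lem:bcirc-dft} read backwards: any block-diagonal middle factor yields a block-circulant matrix, namely the one whose slices are given by the inverse formula in the Remark after that lemma.

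For reality, note that $A_1^\#$ is real, since the group inverse of a real matrix is real by uniqueness and conjugation invariance. Likewise, $\overline{A_k}^{\,\#}=\overline{A_k^\#}$, because conjugating the three defining equations preserves them. With Lemma~\ref{lem:conj-pairing} this gives $A_{p-k+2}^\#=\overline{A_k^\#}$. Thus $\Phi(M)=M^\#$ satisfies the hypothesis of Lemma~\ref{lem:reality-transfer}, and $\cX$ is real.

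Finally, the three identities for $\cX$ follow by transporting the blockwise identities back through the Fourier similarity and applying injectivity of $\bcirc$ (Lemma~\ref{lem:bcirc-hom}). So $\cX=\cA^\#$, with uniqueness inherited from uniqueness of the matrix group inverse of $\bcirc(\cA)$, and the displayed formula holds.
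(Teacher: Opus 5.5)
Your proposal is correct and follows essentially the same route as the paper: transport the three group-inverse equations through $\bcirc$ and the Fourier similarity, then obtain real-valuedness from $A_{p-k+2}^\#=\overline{A_k^\#}$ and Lemma~\ref{lem:realtensor}. Your rank argument in the ``only if'' direction, $\rank(D)=\sum_k\rank(A_k)\ge\sum_k\rank(A_k^2)=\rank(D^2)$ with equality forcing termwise equality, justifies a step the paper only asserts: it writes $\diag(A_1,\dots,A_p)^\#=\diag(A_1^\#,\dots,A_p^\#)$, which tacitly presupposes that each $A_i^\#$ exists, and your block-circulant check likewise makes explicit a point the paper takes for granted.
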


\begin{proof}
(\emph{Only if}.) If $\cA^\#$ exists, then $\bcirc(\cA)^\#$ exists by Lemma~\ref{lem:bcirc-hom}.
Since $\bcirc(\cA)$ is similar to $\diag(A_1,\dots,A_p)$, the latter is group invertible and
$\diag(A_1,\dots,A_p)^\#=\diag(A_1^\#,\dots,A_p^\#)$, hence each $A_i$ is group invertible.

(\emph{If}.) Assume each $A_i^\#$ exists and set $B_i=A_i^\#$. From Lemma~\ref{lem:conj-pairing},
$A_1$ is real and $A_{p-i+2}=\overline{A_i}$ for $i\ge2$, hence
\[
B_{p-i+2}=A_{p-i+2}^\#=(\overline{A_i})^\#=\overline{A_i^\#}=\overline{B_i},
\qquad i=2,\dots,p,
\]
(and $B_1$ is real; also $B_{\frac{p}{2}+1}$ is real if $p$ is even). Define
\[
B=\diag(B_1,\dots,B_p),\qquad 
\bcirc(\cB)=(F_p^*\otimes I_n)\,B\,(F_p\otimes I_n).
\]
Then $\cB$ is real-valued by Lemma~\ref{lem:realtensor}, and $\bcirc(\cB)=\bcirc(\cA)^\#$ by similarity invariance of
the group inverse. Applying $\bcirc^{-1}$ and Lemma~\ref{lem:bcirc-hom} gives $\cB=\cA^\#$, proving the claim.
\end{proof}

Thus, both the factorization theory and the generalized inverse theory for real tensors under the $t$-product are governed by the same Fourier-domain compatibility mechanism. This confirms that the passage from the complex to the real setting is not merely formal, but requires a coherent global treatment of conjugate Fourier data.

\section*{Acknowledgements}
\noindent This research was partially financed by Portuguese Funds through FCT (Fundação para a Ciência e a Tecnologia) within the Project UID/00013/2025. \\ \href{https://doi.org/10.54499/UID/00013/2025}{https://doi.org/10.54499/UID/00013/2025}

\section*{Data Availability}
No datasets were generated or analyzed during the current study. All results are theoretical and fully contained within the manuscript.

\section*{Conflict of interest}
The authors declare that they have no conflict of interest.

\end{document}